\documentclass[12pt]{amsart}

\usepackage[dvips]{graphics}
\usepackage{color, amssymb, amsbsy, amsmath, amsfonts, amssymb, amscd, epsfig, times, graphics}

\marginparwidth 0pt \oddsidemargin 0pt \evensidemargin 0pt
\marginparsep 0pt \topmargin 0pt \textwidth 6.5in
\textheight 8.5in

\newtheorem{theorem}{Theorem}[section]
\newtheorem{proposition}[theorem]{Proposition}

\newtheorem{lemma}[theorem]{Lemma}

\theoremstyle{definition}

\numberwithin{equation}{section}

\allowdisplaybreaks

\tolerance 10000
\sloppy
\begin{document}

\begin{abstract}
Let $\varphi$ be a plurisubharmonic function defined in a neighborhood of the origin in $\mathbb C^n$.
For each real number $t>-n$, we associate to $\varphi$ the weighted log canonical threshold
\[
c_t(\varphi):=\sup\Bigl\{c\geq 0:\|z\|^{2t}e^{-2c\varphi}\in L^1_{\mathrm{loc}} \text{ near }0\Bigr\}.
\]
In this paper, we prove a sharp slope inequality showing that all difference quotients of the function $t\mapsto c_t(\varphi)$ are uniformly controlled by the Lelong number $\nu_\varphi(0)$.
Moreover, we derive explicit lower bounds for the growth of $c_t(\varphi)$ in terms of the complex Monge-Amp\`ere mass of $\varphi$ at the origin.
Our arguments combine weighted integrability estimates, restrictions to complex lines, and techniques from pluripotential theory.
\end{abstract}

\title{Some inequalities for the weighted log canonical thresholds}
\author[N. X. Hong]{Nguyen Xuan Hong}\address{
Department of Mathematics (School of Mathematics and Computer Science), 
Hanoi National University of Education, 136 Xuan Thuy Street, Hanoi, Vietnam} \email{hongnx@hnue.edu.vn} 

\subjclass[2010]{4B05; 32S05; 32U05; 32U25}
\keywords{Plurisubharmonic functions; Complex singularity exponents; Log canonical thresholds}

\maketitle

\newcommand{\di}[2]{d#1(#2)}

\section{Introduction} 
Research on the singularities of plurisubharmonic functions plays a fundamental role in the theory of several complex variables and in complex geometry.
Let $\Omega$ be a domain in $\mathbb C^n$ containing the origin $0$, and let $\varphi$ be a plurisubharmonic function defined on $\Omega$.
One of the most basic invariants that captures the nature of the singularity of $\varphi$ at $0$ is its Lelong number, which is defined by
$$\nu_\varphi(0):=\liminf_{z\to 0}\frac{\varphi(z)}{\log \|z\|}.$$
Beyond the Lelong number, further invariants have been introduced to quantify singularities in a more refined manner.
In particular, Demailly and Koll\'{a}r introduced in \cite{DeKo} a numerical invariant providing a quantitative measure of the singular behavior of plurisubharmonic functions.
Given a holomorphic function $f$ on $\Omega$, Hiep \cite{Hie14} defined the quantity
$$c_0^{f}(\varphi):=\sup\Bigl\{c\geq 0:\ |f|^2 e^{-2c\varphi}\ \text{is integrable in a neighborhood of } 0\Bigr\},$$
which is called the weighted log canonical threshold of $\varphi$ with weight $f$ at $0$.
A central problem in this context is the strong openness conjecture, proposed by Demailly \cite{D21}, which asserts that
$$a:=c_0^{f}(\varphi)<+\infty \quad \Longrightarrow \quad |f|^2 e^{-2a\varphi}\ \text{is not integrable near } 0.$$
This conjecture was later confirmed in \cite{GZ, Hie14}; see also \cite{Be15} for related results.

We now turn to a more general setting.
Let $I\subseteq \mathcal O_0$ be a coherent ideal, and let ${f_j}_{0\leq j\leq m}$ be a set of generators of $I$.
The jumping number of $\varphi$ with respect to $I$ is defined by
$$c_0^{I}(\varphi):=\sup\Bigl\{c\geq 0:\ \sum_{0\leq j\leq m}|f_j|^2 e^{-2c\varphi}\ \text{is integrable near } 0\Bigr\}.$$
In the special case where $I=\mathcal O_0$, the quantity $c_0^{\mathcal O_0}(\varphi)$ coincides with the complex singularity exponent $c_0(\varphi)$.
Furthermore, when $I$ is generated by ${z_1^{2m},\ldots,z_n^{2m}}$, the corresponding invariant $c_0^{I}(\varphi)$ is commonly referred to as the weighted log canonical threshold $c_m(\varphi)$ of $\varphi$ at $0$.
More generally, for any real number $t>-n$, we define
$$c_t(\varphi):=\sup\Bigl\{c\geq 0:\ \|z\|^{2t} e^{-2c\varphi}\ \text{is integrable near } 0\Bigr\}.$$
In this broader framework, the strong openness conjecture implies that
$$b:=c_t(\varphi)<+\infty \quad \Longrightarrow \quad \|z\|^{2t} e^{-2b\varphi}\ \text{is not integrable near } 0.$$

This article is devoted to the study of relations between weighted log canonical thresholds of plurisubharmonic functions.
A classical inequality due to Skoda \cite{Sko} asserts that, provided $\nu_\varphi(0)>0$, one has
$$\frac{1}{\nu_{\varphi}(0)} \leq c_0(\varphi) \leq \frac{n}{\nu_{\varphi}(0)}.$$
This result reveals a fundamental quantitative relationship between the Lelong number and the complex singularity exponent of a plurisubharmonic function.
Motivated by this inequality, we establish a corresponding estimate for weighted log canonical thresholds, which extends Skoda's result to a more general setting.
Our first main theorem is stated as follows.

\begin{theorem}
\label{le3..}
Let $\Omega$ be a domain in $\mathbb C^n$ containing the origin $0$.
Assume that $t>-n$ is a real number and that $\varphi$ is a plurisubharmonic function on $\Omega$.
Then $\nu_\varphi(0)\in (0,+\infty)$ if and only if $c_t(\varphi)\in (0,+\infty)$.
Moreover, if $\nu_\varphi(0)>0$, then
$$\frac{1}{\nu_{\varphi + \frac{|t|-t}{c_t(\varphi)}\log\|z\|}(0)}
\leq c_t(\varphi)
\leq \frac{t+n}{\nu_\varphi(0)}.$$
\end{theorem}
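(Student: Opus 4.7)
My plan is to establish the two inequalities separately and then to deduce the dichotomy $\nu_\varphi(0)\in(0,+\infty)\iff c_t(\varphi)\in(0,+\infty)$ from them.

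For the upper bound $c_t(\varphi)\le (t+n)/\nu_\varphi(0)$, I would use the standard characterization of the Lelong number: for every $\gamma<\nu_\varphi(0)$ there exists a constant $C_\gamma>0$ such that $\varphi(z)\le \gamma\log\|z\|+C_\gamma$ on a small ball around $0$. Exponentiating yields the pointwise lower bound $\|z\|^{2t}e^{-2c\varphi}\ge C'\|z\|^{2(t-c\gamma)}$, whose $\mathbb{C}^n$-volume integral near $0$ diverges as soon as $c\gamma\ge t+n$. Hence $c_t(\varphi)\le (t+n)/\gamma$ for every $\gamma<\nu_\varphi(0)$, and letting $\gamma\uparrow\nu_\varphi(0)$ gives the claim.

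For the lower bound the idea is to reduce it to Skoda's classical inequality $1/\nu_\psi(0)\le c_0(\psi)$ applied to the auxiliary plurisubharmonic function $\psi:=\varphi+\alpha\log\|z\|$, with $\alpha:=(|t|-t)/c_t(\varphi)\ge 0$. Set $c:=c_t(\varphi)$. When $t\ge 0$ we have $\alpha=0$ and $\psi=\varphi$, and boundedness of $\|z\|^{2t}$ near $0$ together with classical Skoda applied to $\varphi$ immediately gives $c_t(\varphi)\ge c_0(\varphi)\ge 1/\nu_\varphi(0)$. When $t<0$ I would invoke the strong openness theorem of Guan--Zhou/Hiep to assert that $\|z\|^{2t}e^{-2c\varphi}$ is \emph{not} integrable near $0$. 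A direct computation gives $e^{-2c\psi}=\|z\|^{-2c\alpha}e^{-2c\varphi}=\|z\|^{4t}e^{-2c\varphi}$, and since $4t<2t<0$ one has $\|z\|^{4t}\ge\|z\|^{2t}$ for $\|z\|\le 1$, so $e^{-2c\psi}\ge \|z\|^{2t}e^{-2c\varphi}$ near $0$. Thus $e^{-2c\psi}$ is also non-integrable, which forces $c_0(\psi)\le c_t(\varphi)$, and combining with Skoda $1/\nu_\psi(0)\le c_0(\psi)$ completes the lower estimate.

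For the equivalence, the upper bound already gives $c_t(\varphi)<+\infty$ whenever $\nu_\varphi(0)>0$, and a H\"older argument gives $c_t(\varphi)>0$ whenever $\nu_\varphi(0)<+\infty$: for $t<0$ pick $p\in(1,n/|t|)$ (possible since $|t|<n$) and set $q=p/(p-1)$, then $c_0(\varphi)\ge 1/\nu_\varphi(0)>0$ yields $\|z\|^{2t}e^{-2c\varphi}\in L^1$ near $0$ for small $c>0$ (the case $t\ge 0$ is immediate from $c_0(\varphi)>0$). Conversely, the same pointwise lower bound used for the upper inequality, applied with arbitrarily large $\gamma$, shows $\nu_\varphi(0)=+\infty\Rightarrow c_t(\varphi)=0$, while $\nu_\varphi(0)=0$ forces $c_0(\varphi)=+\infty$ and the same H\"older argument then yields $c_t(\varphi)=+\infty$. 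The main technical point is choosing the shift $\alpha$ so precisely that strong-openness non-integrability of $\|z\|^{2t}e^{-2c_t(\varphi)\varphi}$ transfers cleanly to non-integrability of $e^{-2c_t(\varphi)\psi}$; the elementary estimate $\|z\|^{4t}\ge\|z\|^{2t}$ for $t<0$ and $\|z\|\le 1$ is what makes this transfer work.
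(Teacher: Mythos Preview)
Your proposal is correct and follows the same skeleton as the paper: the upper bound comes from comparing $\varphi$ to $\gamma\log\|z\|$ via the Lelong number, the lower bound comes from Skoda's inequality applied to the shifted function $\psi=\varphi+\frac{|t|-t}{c_t(\varphi)}\log\|z\|$, and the equivalence uses Skoda together with a H\"older splitting.

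The details differ in two places, and in both your route is slightly more elementary. For the upper bound you use the pointwise inequality $\varphi(z)\le\gamma\log\|z\|+C_\gamma$ (valid for every $\gamma<\nu_\varphi(0)$ by the $\liminf$ definition) and integrate directly; the paper instead uses Jensen's inequality on spherical averages. For the lower bound when $t<0$, you obtain $c_0(\psi)\le c_t(\varphi)$ by the one-line comparison $e^{-2c\psi}=\|z\|^{4t}e^{-2c\varphi}\ge \|z\|^{2t}e^{-2c\varphi}$ (since $4t<2t<0$ and $\|z\|\le1$) together with non-integrability at the threshold; the paper instead proves the stronger \emph{equality} $c_{|t|}(\psi)=c_t(\varphi)$ (its Lemma~2.2) via an auxiliary openness lemma and a contradiction argument. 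Your shortcut suffices here, while the paper's equality is reused later in the article. One small remark: the strong openness you invoke for the weight $\|z\|^{2t}$ with real $t$ is not literally the Guan--Zhou/Hiep statement for holomorphic weights; it follows from it after a short reduction (pass to $\sum_k|z_k|^{2m}$ for an integer $m\ge t$ and absorb the difference into $\varphi$), which the paper carries out in its Lemma~2.1.
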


The sharpness of Theorem~\ref{le3..} can be illustrated as follows.
Let $\varphi$ and $\psi$ be plurisubharmonic functions on $\mathbb C^n$ defined by
\begin{equation}
\label{e17.11}
\varphi(z):=\log|z_1|
\quad \text{and} \quad
\psi(z):=\log|z|,
\qquad z=(z_1,\ldots,z_n).
\end{equation}
A direct computation shows that
$$\nu_{\varphi}(0)=\nu_{\psi}(0)=1
\quad \text{and} \quad
c_t(\psi)=n+t,$$
while
$$c_t(\varphi)=1 \quad \text{whenever } t(n-1)>0.$$
Consequently, for $t\geq 0$ and $n\geq 2$, we have
$$c_t(\varphi)=\frac{1}{\nu_{\varphi}(0)},$$
whereas, for all $t>-n$,
$$c_t(\psi)=\frac{t+n}{\nu_{\psi}(0)}.$$
These examples demonstrate that both bounds in Theorem~\ref{le3..} are attained.

Next, let $1\leq k\leq n$ be an integer.
A result due to Hiep \cite{Hie17} asserts that
$$c_{k-n}(\varphi)
=
\sup\bigl\{
c_0(\varphi|_H) : H \text{ is a } k\text{-dimensional linear subspace through } 0
\bigr\}.$$
Subsequently, Guan and Zhou \cite{GZ20} proved that, if $k\leq n-2$ and $\nu_\varphi(0)>0$, then
\begin{equation}
\label{e17.11.}
c_{k-n+2}(\varphi)-c_{k-n+1}(\varphi)
\leq
c_{k-n+1}(\varphi)-c_{k-n}(\varphi).
\end{equation}
Motivated by this inequality, we establish the following general result.

\begin{theorem}
\label{th2}
Let $\Omega$ be a domain in $\mathbb C^n$ containing the origin $0$, and let $\varphi$ be a plurisubharmonic function on $\Omega$ such that $\nu_\varphi(0)>0$.
Then, for any real numbers $p>q\geq s>t>-n$, the following chain of inequalities holds:
$$0
\leq
\frac{c_p(\varphi)-c_q(\varphi)}{p-q}
\leq
\frac{c_s(\varphi)-c_t(\varphi)}{s-t}
\leq
\frac{c_t(\varphi)}{t+n}
\leq
\frac{1}{\nu_\varphi(0)}.$$
\end{theorem}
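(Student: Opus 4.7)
The plan is to view $c_t(\varphi)$ as a function of $t$ on $(-n,+\infty)$ and derive all four inequalities from the following structural facts: (i) $t\mapsto c_t(\varphi)$ is non-decreasing; (ii) $t\mapsto c_t(\varphi)$ is concave; (iii) $c_a(\varphi)\to 0$ as $a\to -n^+$; and (iv) $c_t(\varphi)\leq (t+n)/\nu_\varphi(0)$ from Theorem \ref{le3..}. The leftmost inequality is exactly (i); the middle inequality (between the two slopes) follows from (ii) applied to the ordered points $t<s\leq q<p$, since concavity makes difference quotients non-increasing in the location of the interval; the third inequality will follow by combining (ii) and (iii) via a limiting procedure; and the rightmost inequality is (iv). Fact (i) is trivial: after shrinking the neighborhood of $0$ so that $\|z\|\leq 1$, one has $\|z\|^{2t'}\leq \|z\|^{2t}$ whenever $t<t'$, so every $c$ admissible for $t$ remains admissible for $t'$.

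The main analytic step is (ii), and the plan is to use H\"{o}lder's inequality. Given $\lambda\in(0,1)$ and any $c_1<c_{t_1}(\varphi)$, $c_2<c_{t_2}(\varphi)$, one factors
$$
\|z\|^{2(\lambda t_1+(1-\lambda)t_2)}e^{-2(\lambda c_1+(1-\lambda)c_2)\varphi}=\bigl(\|z\|^{2t_1}e^{-2c_1\varphi}\bigr)^{\lambda}\bigl(\|z\|^{2t_2}e^{-2c_2\varphi}\bigr)^{1-\lambda}
$$
and applies H\"{o}lder with conjugate exponents $1/\lambda$ and $1/(1-\lambda)$ on a small ball about $0$ on which each of the two factors on the right is integrable by choice of $c_i$. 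This yields integrability of the left-hand side, hence $c_{\lambda t_1+(1-\lambda)t_2}(\varphi)\geq\lambda c_1+(1-\lambda)c_2$; passing to the supremum in $c_1,c_2$ gives concavity.

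For (iii) the bound (iv) supplied by Theorem \ref{le3..} forces $c_a(\varphi)\leq (a+n)/\nu_\varphi(0)\to 0$ as $a\to -n^+$, which uses $\nu_\varphi(0)>0$. With (ii) and (iii) in hand, for $-n<a<t<s$ concavity gives
$$
\frac{c_s(\varphi)-c_t(\varphi)}{s-t}\leq \frac{c_t(\varphi)-c_a(\varphi)}{t-a},
$$
and letting $a\to -n^+$, by (iii), the right-hand side tends to $c_t(\varphi)/(t+n)$, producing the third inequality. The middle inequality is obtained similarly by chaining concavity across the consecutive intervals $(t,s)$, $(s,q)$, $(q,p)$ (the step $(s,q)$ collapses if $q=s$).

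The main conceptual step is recognizing that the full chain is equivalent to concavity of $c_t(\varphi)$ on $(-n,\infty)$ together with its vanishing at the endpoint $-n$; after that, no step is technically delicate. The only potential subtlety is the passage from strict inequalities $c_i<c_{t_i}(\varphi)$ to equality in the concavity statement, which is handled by taking suprema, and confirming that integrability on a sufficiently small ball is preserved under H\"{o}lder — both routine once one works locally near $0$.
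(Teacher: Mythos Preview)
Your proof is correct and takes a genuinely simpler route than the paper's. The key simplification is your direct proof of concavity of $t\mapsto c_t(\varphi)$ via H\"{o}lder's inequality with exponents $1/\lambda$ and $1/(1-\lambda)$: the paper instead derives the three-point concavity inequality through Lemma~\ref{le4} (which invokes the strong openness conjecture, so that $\|z\|^{2t}e^{-2a\varphi}$ is known to be non-integrable at the threshold $a=c_t(\varphi)$) combined with Lemma~\ref{le1}, and obtains the bound $\frac{c_s-c_t}{s-t}\leq \frac{c_t}{t+n}$ separately as Proposition~\ref{th1}, whose proof passes first through the Cegrell class $\mathcal{E}(\Omega)$ (via Lemma~\ref{le4.}) and then reduces the general case to that one via Lemma~\ref{le3.} and an approximation $\psi_j=\max\{\psi,j\log\|z\|\}$. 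Your argument bypasses all of this machinery: once concavity is established, the inequality $\frac{c_s-c_t}{s-t}\leq \frac{c_t}{t+n}$ follows immediately by comparing with the secant over $(a,t)$ and letting $a\to -n^+$, using $0\leq c_a(\varphi)\leq (a+n)/\nu_\varphi(0)\to 0$ from Theorem~\ref{le3..}. The paper's intermediate Lemmas~\ref{le4} and~\ref{le4.} do carry extra content (an explicit truncation $\varphi_\gamma$ with the same threshold, and strict monotonicity of $t\mapsto c_t(\varphi)$ in the Cegrell class) that is reused in later sections; but for Theorem~\ref{th2} itself your approach is both shorter and more elementary, needing neither strong openness nor Cegrell's classes.
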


Finally, consider again the functions $\varphi$ and $\psi$ defined in \eqref{e17.11}.
For $\varphi(z)=\log|z_1|$, we have
$$\frac{c_p(\varphi)-c_q(\varphi)}{p-q}
=
\frac{c_s(\varphi)-c_t(\varphi)}{s-t}
=0,
\quad
\forall\, p>q\geq s>t\geq 0,\ n\geq 2.$$
On the other hand, for $\psi(z)=\log|z|$, it follows that
$$\frac{c_p(\psi)-c_q(\psi)}{p-q}
=
\frac{c_s(\psi)-c_t(\psi)}{s-t}
=
\frac{c_t(\psi)}{t+n}
=
\frac{1}{\nu_\psi(0)}
=1.$$
These extremal cases show that all inequalities in Theorem~\ref{th2} are optimal, and hence the theorem is sharp.

To obtain sharper inequalities, we additionally require that the plurisubharmonic function $\varphi$ belong to a Cegrell class (see \cite{Ce04}).
Let $\Omega$ be a bounded hyperconvex domain.
We denote by $\mathcal E_0(\Omega)$ the class of bounded negative plurisubharmonic functions $\varphi$ on $\Omega$ such that, for every $\varepsilon>0$, there exists $\delta>0$ with
$$\overline{\{\varphi<-\varepsilon\}} \subset\subset \Omega,$$
and such that the total complex Monge-Amp\`ere mass of $\varphi$ is finite, namely,
$$\int_\Omega (dd^c \varphi)^n < +\infty.$$
We further denote by $\mathcal E(\Omega)$ the class of negative plurisubharmonic functions $\varphi$ on $\Omega$ with the following approximation property: for every open set $\omega\Subset \Omega$, there exists a decreasing sequence ${\varphi_j}\subset \mathcal E_0(\Omega)$ converging pointwise to $\varphi$ on $\omega$ and satisfying
$$\sup_{j\geq 1} \int_\Omega (dd^c \varphi_j)^n < +\infty.$$
In particular, every negative plurisubharmonic function that is bounded outside a compact subset of $\Omega$ belongs to the class $\mathcal E(\Omega)$.

We are now in a position to state our third main result, which characterizes the extremal case for weighted log canonical thresholds.

\begin{theorem}
\label{th.}
Let $\Omega$ be a bounded hyperconvex domain in $\mathbb C^n$ containing the origin $0$, and let $\varphi\in \mathcal E(\Omega)$.
Assume that
$$\nu_{\varphi|_l}(0)=\nu_\varphi(0)>0$$
for every complex line $l$ passing through $0$, and that there exist constants $a,b>0$ such that
$$\varphi(z)\geq a\log\|z\|-b \quad \text{on } \Omega.$$
Then, for every $t>-n$, the weighted log canonical threshold of $\varphi$ satisfies
$$c_t(\varphi)=\frac{t+n}{\nu_\varphi(0)}.$$
\end{theorem}

When $n=1$ and $\varphi$ is a subharmonic function on $\Omega$ with $\nu_\varphi(0)>0$, there exists a subharmonic function $\psi$ on $\Omega$ such that
$$\int_{\{0\}} dd^c\psi = 0$$
and
$$\varphi(z)=\psi(z)+\nu_\varphi(0)\log\|z\| \quad \text{in a neighborhood of } 0.$$
As a direct consequence, we obtain
$$c_t(\varphi)
= c_t\bigl(\nu_\varphi(0)\log\|z\|\bigr)
= \frac{t+1}{\nu_\varphi(0)}.$$
Therefore, the conclusion of Theorem~\ref{th.} holds for all subharmonic functions $\varphi$ in the one dimensional case $n=1$.

When $n>1$, the problem becomes substantially more involved.
Let $\varphi\in\mathcal E(\Omega)$ and define, for $0\leq j\leq n$,
$$e_j(\varphi)
:= \int_{\{0\}} (dd^c\varphi)^j \wedge (dd^c\log\|z\|)^{n-j}.$$
It is immediate that
$$e_0(\varphi)=1
\quad \text{and} \quad
e_1(\varphi)=\nu_\varphi(0).
$$
Assume now that $e_n(\varphi)>0.$
A result of \cite{ACKHZ} shows that
\begin{equation}
\label{e21.11.318}
c_0(\varphi)\geq n,e_n(\varphi)^{-1/n}.
\end{equation}
Subsequently, Demailly and Hiep \cite{DeHi} refined this estimate by establishing the lower bound
$$c_0(\varphi)\geq \sum_{j=0}^{n-1}\frac{e_j(\varphi)}{e_{j+1}(\varphi)}.$$
Later, Hiep \cite{H17} proved the inequality
$$c_0(\varphi)-c_{-1}(\varphi)
\geq
\frac{(n-1)^{\,n-1}}{c_{-1}(\varphi)^{\,n-1}e_n(\varphi)}.$$
This result was further extended by Hong \cite{Hong19}, who showed that
$$c_{t+s}(\varphi)-c_t(\varphi)
\geq
s\,\frac{(n-1)^{\,n-1}}{c_t(\varphi)^{\,n-1}e_n(\varphi)},
\quad \forall\, t\geq 0,$$
for all
$$0\leq s \leq \frac{c_t(\varphi)^n e_n(\varphi)}{(n-1)^n}.$$
More recently, Hiep \cite{Hiep23} obtained the inequality
$$c_{1-n}(\varphi)
\prod_{j=2}^{k}
\bigl[c_{j-n}(\varphi)-c_{j-1-n}(\varphi)\bigr]
\geq
\left(\prod_{j=1}^{k-1} e_j(\varphi)\right)^{-1},
\quad \forall\, k=1,2,\ldots,n.
$$
Motivated by these developments, we now state our fourth main result.

\begin{theorem}
\label{th}
Let $n>1$ be an integer and let $\Omega$ be a bounded hyperconvex domain in $\mathbb C^n$ containing the origin $0$.
Assume that $t>-n$ and that $\varphi\in\mathcal E(\Omega)$ satisfies $\nu_\varphi(0)>0.$
Then $e_n(\varphi)\in(0,+\infty)$, and
$$c_{t+s}(\varphi)-c_t(\varphi)
\geq
\frac{s\,(n-1)^{\,n-1}\,\nu_\varphi(0)^{\,n-1}}
{(|t|+n)^{\,n-1}e_n(\varphi)}
$$
for all
$$0<s\leq \left(\frac{n}{n-1}\right)^n.$$
\end{theorem}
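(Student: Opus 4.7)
The plan is to verify first the finiteness/positivity of $e_n(\varphi)$, then prove the main inequality by separating into the cases $t \geq 0$ and $-n < t < 0$. The $e_n$-assertion follows directly from the hypotheses: positivity is a consequence of the classical Demailly--Lelong inequality $e_n(\varphi) = (dd^c\varphi)^n(\{0\}) \geq \nu_\varphi(0)^n > 0$, valid for $\varphi \in \mathcal{E}(\Omega)$; finiteness uses the Cegrell definition of $\mathcal{E}(\Omega)$ via a decreasing approximating sequence $\{\varphi_j\} \subset \mathcal{E}_0(\Omega)$ with $\sup_j \int_\Omega(dd^c\varphi_j)^n < \infty$, so that weak convergence of the Monge--Amp\`ere measures forces $e_n(\varphi) < \infty$.

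For $t \geq 0$, I would apply Hong's inequality $c_{t+s}(\varphi) - c_t(\varphi) \geq s(n-1)^{n-1}/(c_t(\varphi)^{n-1} e_n(\varphi))$ (valid on $0 \leq s \leq c_t(\varphi)^n e_n(\varphi)/(n-1)^n$) in combination with the upper bound $c_t(\varphi) \leq (t+n)/\nu_\varphi(0)$ from Theorem \ref{le3..}. The substitution yields the claimed estimate, with $|t|+n = t+n$. The range $0 < s \leq (n/(n-1))^n$ is contained in Hong's range of validity: using the inequality $c_0(\varphi) \geq n\, e_n(\varphi)^{-1/n}$ recalled in the introduction together with the monotonicity $c_t \geq c_0$ for $t \geq 0$ (from Theorem \ref{th2}), one gets $c_t(\varphi)^n e_n(\varphi) \geq n^n$, hence $c_t(\varphi)^n e_n(\varphi)/(n-1)^n \geq (n/(n-1))^n$.

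For $-n < t < 0$ no direct analog of Hong's inequality is available, and this is the main obstacle. My plan is to reduce to the previous case via the substitution $\psi := \varphi + \alpha \log\|z\|$ with $\alpha > 0$, which keeps $\psi \in \mathcal{E}(\Omega)$ and enjoys $\nu_\psi(0) = \nu_\varphi(0) + \alpha$, the implicit identity $c_\sigma(\psi) = c \Leftrightarrow c = c_{\sigma - c\alpha}(\varphi)$, and the binomial expansion $e_n(\psi) = \sum_{k=0}^n \binom{n}{k}\alpha^{n-k} e_k(\varphi)$. The choice $\alpha = |t|/c_t(\varphi)$ produces $c_0(\psi) = c_t(\varphi)$, after which the $t \geq 0$ case applied to $\psi$ should be translated back to $\varphi$. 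The hardest part will be the bookkeeping: bounding the mixed masses $e_k(\varphi)$ for $0 \leq k < n$ appearing in $e_n(\psi)$, likely through the Khovanskii--Teissier log-concavity $e_k^2 \geq e_{k-1}e_{k+1}$ together with $e_1(\varphi) = \nu_\varphi(0)$ and the upper bound on $c_t(\varphi)$, so that the factor $(|t|+n)^{n-1}$ emerges cleanly in the denominator. Verifying that the shifted $s$-parameter remains in $(0, (n/(n-1))^n]$ after substitution will also require care; here concavity of $c_t$ in $t$ (Theorem \ref{th2}) should permit reducing to the extremal value $s = (n/(n-1))^n$ without loss of generality.
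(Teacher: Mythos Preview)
Your treatment of $e_n(\varphi)\in(0,+\infty)$ and of the case $t\geq 0$ is correct and matches the paper's argument exactly: Hong's 2019 inequality at the parameter $t$, the upper bound $c_t(\varphi)\leq (t+n)/\nu_\varphi(0)$ from Theorem~\ref{le3..}, and the lower bound $c_0(\varphi)\geq n\,e_n(\varphi)^{-1/n}$ to check the admissible range of $s$.

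The difficulty is your plan for $-n<t<0$. The paper does \emph{not} perform the substitution $\psi=\varphi+\alpha\log\|z\|$ at all. Instead it observes that Hong's inequality applies verbatim at the nonnegative parameter $|t|$, giving
\[
c_{|t|+s}(\varphi)-c_{|t|}(\varphi)\ \geq\ \frac{s(n-1)^{n-1}}{c_{|t|}(\varphi)^{n-1}e_n(\varphi)}
\ \geq\ \frac{s(n-1)^{n-1}\nu_\varphi(0)^{n-1}}{(|t|+n)^{n-1}e_n(\varphi)},
\]
and then invokes the concavity of $t\mapsto c_t(\varphi)$ (Theorem~\ref{th2}) to conclude $c_{t+s}(\varphi)-c_t(\varphi)\geq c_{|t|+s}(\varphi)-c_{|t|}(\varphi)$, since $t\leq |t|$. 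That is the whole argument for negative $t$: one line. You already cite Theorem~\ref{th2}, but only for the $s$-range; you are missing that it immediately collapses the negative-$t$ case to the nonnegative one.

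Your substitution route, by contrast, runs into real obstacles. First, the relation $c_\sigma(\psi)=c\Leftrightarrow c=c_{\sigma-c\alpha}(\varphi)$ is implicit in $c$, so there is no clean $\sigma$ for which $c_\sigma(\psi)=c_{t+s}(\varphi)$; the shift you would need is $\sigma=t+s+\alpha\,c_{t+s}(\varphi)$, which depends on the very quantity you are trying to bound and is strictly larger than $s$, tightening the range constraint beyond $s\leq (n/(n-1))^n$. Second, your stated inequality $e_k^2\geq e_{k-1}e_{k+1}$ goes the wrong way for local Monge--Amp\`ere masses: with $e_0=1$ and $e_1=\nu_\varphi(0)$, log-concavity would force $e_n\leq \nu_\varphi(0)^n$, contradicting the Demailly inequality $e_n\geq \nu_\varphi(0)^n$ you yourself invoke. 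The correct relation is log-convexity $e_k^2\leq e_{k-1}e_{k+1}$, and it is not clear your bookkeeping survives the sign flip. In short, the substitution program may be salvageable but is neither complete nor necessary; replace it with the one-line concavity reduction above.
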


The paper is organized as follows.
Section~2 is devoted to the proof of Theorem~\ref{le3..}.
In Section~3, we establish Theorem~\ref{th2}.
The proof of Theorem~\ref{th.} is presented in Section~4.
Finally, Section~5 is dedicated to the proof of Theorem~\ref{th}.

%%%%%%%%%%%%
%%%%%%%%

\section{Skoda-type inequality} % of weighted  log canonical thresholds} 
We begin by collecting two auxiliary lemmas that describe the basic positivity
and stability properties of the weighted log canonical threshold.

\begin{lemma}
\label{le3}
Let $\Omega$ be a domain in $\mathbb C^n$ containing the origin $0$. 
Assume that $t>-n$ and that $\varphi$ is a plurisubharmonic function on $\Omega$.
Then the following assertions hold.
\begin{enumerate}
\item[(i)] $a:=c_t(\varphi)>0$.
\item[(ii)] For every $c\in(0,a)$, there exists $\delta\in(0,t+n)$ such that
\[
c_{t-\delta}(\varphi)>c+\delta.
\]
\end{enumerate}
\end{lemma}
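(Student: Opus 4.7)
The plan is to prove both parts via suitably chosen H\"older inequalities.

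For part (i), I would combine Skoda's classical positivity of the complex singularity exponent $c_0(\varphi)>0$ with the elementary fact that $\|z\|^{2t}$ is integrable near $0$ (which uses $t>-n$). Picking conjugate H\"older exponents $p,q>1$ with $p$ close enough to $1$ (and in the case $t<0$ the additional constraint $p<-n/t$, so that $\|z\|^{2tp}$ is still integrable near $0$), H\"older gives
\[
\int \|z\|^{2t}e^{-2c\varphi}\,dV \le \Bigl(\int \|z\|^{2tp}\,dV\Bigr)^{1/p}\Bigl(\int e^{-2cq\varphi}\,dV\Bigr)^{1/q},
\]
and the right-hand side is finite as soon as $cq<c_0(\varphi)$. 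Hence $a=c_t(\varphi)\ge c_0(\varphi)/q>0$.

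For part (ii), the key is a H\"older decomposition designed to trade weight against exponent. Given $c\in(0,a)$, I would fix an intermediate value $c'\in(c,a)$; the slack $c'-c$ is what will ultimately produce a strict inequality at the end. Then I would choose $p_1\in(1,a/c')$, which is possible because $a>c'$, set its conjugate exponent $p_3=p_1/(p_1-1)>1$, and define
\[
\delta_0:=\min\Bigl\{\frac{a}{p_1}-c',\ \frac{t+n}{p_3}\Bigr\}>0.
\]
For any $\delta\in(0,\delta_0)$, I would split $\|z\|^{2(t-\delta)}=\|z\|^{2t/p_1}\cdot\|z\|^{2(t/p_3-\delta)}$ and apply H\"older with exponents $p_1$ and $p_3$, loading the entire exponential factor onto the first piece:
\[
\int\|z\|^{2(t-\delta)}e^{-2(c'+\delta)\varphi}\,dV \le \Bigl(\int\|z\|^{2t}e^{-2(c'+\delta)p_1\varphi}\,dV\Bigr)^{1/p_1}\Bigl(\int\|z\|^{2(t-\delta p_3)}\,dV\Bigr)^{1/p_3}.
\]
The first factor is finite because $(c'+\delta)p_1<a=c_t(\varphi)$, and the second because $t-\delta p_3>-n$; so $\|z\|^{2(t-\delta)}e^{-2(c'+\delta)\varphi}$ is integrable near $0$, which yields $c_{t-\delta}(\varphi)\ge c'+\delta>c+\delta$. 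Since $p_3>1$ automatically forces $\delta<(t+n)/p_3<t+n$, the chosen $\delta$ lies in $(0,t+n)$, as required.

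The main obstacle is choosing the H\"older parameters so that both factors on the right are controllable: $p_1$ has to be close enough to $1$ that $(c'+\delta)p_1$ stays strictly below the threshold $a=c_t(\varphi)$, while $\delta p_3$ has to remain below $t+n$. Introducing the intermediate $c'\in(c,a)$ is the small trick that upgrades the non-strict integrability bound delivered by H\"older into the strict inequality $c_{t-\delta}(\varphi)>c+\delta$ demanded by the lemma.
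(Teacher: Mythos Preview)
Your argument is correct, and it takes a genuinely different and more elementary route than the paper. For (i), the paper defines $\psi_j=\varphi/j-r\log\|z\|$ with $r\in(-n,t)$ negative and invokes the lower semicontinuity of the complex singularity exponent under a.e.\ convergence (Theorem~1.1 in \cite{Hie17}) to get $c_0(\psi_{j_0})>1$, hence $c_t(\varphi)\ge c_r(\varphi)\ge 1/j_0$. Your H\"older argument, by contrast, needs only Skoda's classical bound $c_0(\varphi)>0$. For (ii), the paper rewrites the integrand via $\psi=c\varphi+(m-t)\log\|z\|$ with $m\in\mathbb Z$, $m>t$, and then appeals to the \emph{strong openness conjecture} (Guan--Zhou, Hiep) to gain a factor $(1+\gamma)$ in the exponent, which after unwinding produces the required $\delta$. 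Your approach avoids this deep input entirely: the two-parameter H\"older split $\|z\|^{2(t-\delta)}=\|z\|^{2t/p_1}\cdot\|z\|^{2(t/p_3-\delta)}$, with the whole exponential loaded on the first factor and $p_1<a/c'$, already yields integrability at level $c'+\delta$ and weight $t-\delta$. The introduction of the intermediate $c'\in(c,a)$ is exactly what is needed to turn the $\ge$ coming from H\"older into the strict $>$. So the paper's proof showcases how semicontinuity and strong openness can be used, while yours shows that for this particular lemma neither is needed.
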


\begin{proof}
(i)  
Fix a real number $r\in(-n,t)$ and, for each integer $j\ge1$, define a plurisubharmonic function
\[
\psi_j(z):=\frac{\varphi(z)}{j}-r\log\|z\|,\qquad z\in\Omega.
\]
As $j\to+\infty$, the sequence $\psi_j$ converges almost everywhere on $\Omega$ to
\[
\psi(z):=-r\log\|z\|.
\]
By \cite[Theorem~1.1]{Hie17}, this convergence implies
\[
\liminf_{j\to+\infty} c_0(\psi_j)\ge c_0(\psi)=\frac{n}{-r}>1.
\]
Consequently, there exists $j_0\ge1$ such that $c_0(\psi_{j_0})>1$. 
It follows that
\[
a=c_t(\varphi)\ge c_r(\varphi)\ge \frac{1}{j_0}>0,
\]
which proves~(i).

\medskip
(ii)  
Let $m>t$ be an integer and define
\[
\psi(z):=c\,\varphi(z)+(m-t)\log\|z\|,\qquad z\in\Omega.
\]
Since $0<c<a$, there exists $\varepsilon\in(0,1)$ such that $B(0,2\varepsilon)\Subset\Omega$ and
\[
\int_{B(0,2\varepsilon)} \|z\|^{2t}e^{-2c\varphi}\,dV_{2n}<+\infty.
\]
This immediately yields
\[
\sum_{k=1}^n\int_{B(0,2\varepsilon)} |z_k|^{2m}e^{-2\psi}\,dV_{2n}<+\infty.
\]
By the strong openness theorem, there exists $\gamma>0$ such that
\[
\sum_{k=1}^n\int_{B(0,\varepsilon)} |z_k|^{2m}e^{-2(1+\gamma)\psi}\,dV_{2n}<+\infty.
\]
Rewriting the integrand, we obtain
\[
\int_{B(0,\varepsilon)} \|z\|^{2t-2\gamma(m-t)}e^{-2(1+\gamma)c\varphi}\,dV_{2n}<+\infty.
\]
Therefore,
\[
c_{t-\delta}(\varphi)>c+\delta
\quad\text{for all}\quad
0<\delta\le \min\{\gamma(m-t),\,\gamma c\},
\]
which completes the proof.
\end{proof}

\begin{lemma}
\label{le3.}
Let $\Omega$ be a domain in $\mathbb C^n$ containing the origin $0$. 
Assume that $t>-n$ and that $\varphi$ is a plurisubharmonic function on $\Omega$ such that
\[
a:=c_t(\varphi)<+\infty.
\]
Then, for every $s\ge t$, one has
\[
a
=
c_s\!\left(\varphi+\frac{s-t}{a}\log\|z\|\right).
\]
\end{lemma}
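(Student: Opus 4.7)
The plan is to substitute $\psi := \varphi + \frac{s-t}{a}\log\|z\|$ directly into the defining integrand for $c_s(\psi)$ and reduce everything to the hypothesis $c_t(\varphi) = a$ combined with the strong openness theorem. First I note that $\psi$ is plurisubharmonic since $s \geq t$ and $a > 0$ make the coefficient of $\log\|z\|$ nonnegative. For any $c \geq 0$ a purely algebraic rewriting gives
$$\|z\|^{2s}\, e^{-2c\psi} \;=\; \|z\|^{2 r(c)}\, e^{-2c\varphi}, \qquad r(c) \;:=\; s - \frac{c(s-t)}{a}.$$
The function $r$ is affine and decreasing (since $s \geq t$), with $r(0)=s$ and, crucially, $r(a)=t$. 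So the family $\bigl(\|z\|^{2s}e^{-2c\psi}\bigr)_{c\geq 0}$ is precisely a reparametrization of the family defining $c_t(\varphi)$, pinned at the critical value $c=a$ to the integrand that witnesses non-integrability for $c_t(\varphi)=a$.

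For the lower bound $c_s(\psi) \geq a$, fix any $c \in (0,a)$. Since $r$ is decreasing with $r(a)=t$, we have $r(c) \geq t$, so on a neighborhood of $0$ contained in the unit ball, $\|z\|^{2r(c)} \leq \|z\|^{2t}$. By the hypothesis $c_t(\varphi) = a$ and $c<a$, the function $\|z\|^{2t}e^{-2c\varphi}$ is integrable near $0$; hence the identity above shows that $\|z\|^{2s}e^{-2c\psi}$ is integrable near $0$ as well. Letting $c \nearrow a$ yields $c_s(\psi) \geq a$.

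For the upper bound $c_s(\psi) \leq a$, evaluate the identity at $c=a$: since $r(a)=t$, we obtain $\|z\|^{2s}e^{-2a\psi} = \|z\|^{2t}e^{-2a\varphi}$. The strong openness theorem, applied to the finite threshold $c_t(\varphi) = a < +\infty$, asserts precisely that $\|z\|^{2t}e^{-2a\varphi}$ is not integrable near $0$; hence neither is $\|z\|^{2s}e^{-2a\psi}$. Monotonicity of integrability in $c$ (valid because $\psi \to -\infty$ near $0$, so $\psi \leq 0$ on a small neighborhood) then forces $c_s(\psi) \leq a$, completing the proof.

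I do not expect a real obstacle: the argument is a one-line algebraic substitution followed by an application of strong openness for the weighted threshold $c_t(\varphi)$. The only mildly delicate point is making sure the version of strong openness invoked is indeed the one for the weighted threshold $c_t$ (treating $\|z\|^{2t}$ as a weight in the integrand), but this is exactly the formulation $b := c_t(\varphi) < +\infty \Rightarrow \|z\|^{2t}e^{-2b\varphi}$ not integrable, which is already recorded in the introduction.
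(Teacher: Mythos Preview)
Your proof is correct and follows the same skeleton as the paper's: both hinge on the identical algebraic identity $\|z\|^{2s}e^{-2c\psi}=\|z\|^{2r(c)}e^{-2c\varphi}$, and the lower bound argument is the same. For the upper bound the paper argues by contradiction via Lemma~\ref{le3}(ii): assuming $c_s(\psi)>a$, it extracts $\delta>0$ with $c_{s-\delta}(\psi)>a+\delta$, then plugs $x=a+\delta$ into the identity to show $\|z\|^{2t}e^{-2(a+\delta)\varphi}$ is integrable near $0$, contradicting $c_t(\varphi)=a$. Your route is more direct: at $c=a$ the identity makes the two integrands literally equal, so the weighted strong openness statement recorded in the introduction gives non-integrability of $\|z\|^{2s}e^{-2a\psi}$ immediately, and monotonicity in $c$ (which needs only that $\psi$ is bounded above near $0$, automatic for a plurisubharmonic function; your justification via $\psi\to-\infty$ covers $s>t$, while $s=t$ is trivial) finishes. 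Both arguments ultimately rest on strong openness, but you bypass the detour through Lemma~\ref{le3}(ii). One small omission: you invoke $a>0$ without comment; in the paper this is supplied by Lemma~\ref{le3}(i).
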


\begin{proof}
By Lemma~\ref{le3}, we have $a>0$, and hence $a\in(0,+\infty)$.  
Fix $s\ge t$ and define
\[
\psi(z):=\varphi(z)+\frac{s-t}{a}\log\|z\|,\qquad z\in\Omega.
\]
A direct computation shows that for every real number $x$,
\begin{equation}
\label{e6}
\|z\|^{2s}e^{-2x\psi}
=
\|z\|^{2t+\frac{2(a-x)(s-t)}{a}}e^{-2x\varphi}
\quad\text{on }\Omega\setminus\{0\}.
\end{equation}

Let $c\in(0,a)$.  
Since
\[
0\le \|z\|^{\frac{2(a-c)(s-t)}{a}}\le1
\quad\text{for all }\|z\|\le1,
\]
it follows from \eqref{e6} that $c_s(\psi)\ge c$. 
As $c<a$ is arbitrary, we conclude that
\[
c_s(\psi)\ge a.
\]

We now show that equality must hold. 
Suppose, to the contrary, that $c_s(\psi)>a$. 
Then, by Lemma~\ref{le3}, there exists $\delta\in(0,s+n)$ such that
\[
c_{s-\delta}(\psi)>a+\delta.
\]
Consequently, one can choose $\varepsilon\in(0,1)$ with $B(0,2\varepsilon)\Subset\Omega$ such that
\[
\int_{B(0,\varepsilon)} \|z\|^{2s-2\delta}e^{-2(a+\delta)\psi}\,dV_{2n}<+\infty.
\]
Since $s>t$, identity \eqref{e6} yields
\begin{align*}
\int_{B(0,\varepsilon)} \|z\|^{2t}e^{-2(a+\delta)\varphi}\,dV_{2n}
&\le
\int_{B(0,\varepsilon)} \|z\|^{2s+\frac{2\delta(s-t)}{a}}e^{-2(a+\delta)\psi}\,dV_{2n} \\
&\le
\int_{B(0,\varepsilon)} \|z\|^{2s-2\delta}e^{-2(a+\delta)\psi}\,dV_{2n}
<+\infty.
\end{align*}
This implies
\[
a=c_t(\varphi)\ge a+\delta>a,
\]
which is a contradiction. 
Therefore, $c_s(\psi)=a$, as claimed.
\end{proof}

%We now turn to the proof of Theorem~\ref{le3..}.
With these preparations at hand, we are now in a position to prove
Theorem~\ref{le3..}.
\begin{proof}[Proof of Theorem \ref{le3..}]
Set
\[
a:=\nu_\varphi(0)
\quad\text{and}\quad
b:=c_t(\varphi).
\]
We first show that
\begin{equation}
\label{e8.}
a=0 \quad \Longrightarrow \quad b=+\infty.
\end{equation}
Fix $p,q>1$ such that $tp>-n$ and
\[
\frac1p+\frac1q=1.
\]
Let $c>0$ be arbitrary. 
By Skoda's theorem \cite{Sko}, the assumption $a=0$ implies
\[
c_0(\varphi)=+\infty.
\]
Consequently, there exists $\varepsilon>0$ such that $B(0,\varepsilon)\Subset\Omega$ and
\[
\int_{B(0,\varepsilon)} e^{-2cq\varphi}\,dV_{2n}<+\infty.
\]
Applying H\"older's inequality, we obtain
\begin{align*}
\int_{B(0,\varepsilon)} \|z\|^{2t} e^{-2c\varphi}\,dV_{2n}
&\le
\left(\int_{B(0,\varepsilon)} \|z\|^{2pt}\,dV_{2n}\right)^{\!1/p}
\left(\int_{B(0,\varepsilon)} e^{-2cq\varphi}\,dV_{2n}\right)^{\!1/q}
<+\infty.
\end{align*}
This shows that $c_t(\varphi)\ge c$. Since $c>0$ is arbitrary, we conclude that $b=+\infty$, which proves \eqref{e8.}.

\medskip

We next establish the implication
\begin{equation}
\label{e8..}
a>0 \quad \Longrightarrow \quad b\le \frac{t+n}{a}.
\end{equation}
Let $c$ be a real number satisfying
\[
c>\frac{t+n}{a}.
\]
Choose $\varepsilon>0$ such that $B(0,\varepsilon)\Subset\Omega$ and
\begin{equation}
\label{e3.1}
\frac{1}{\sigma_{2n-1}}
\int_{\{\|z\|=1\}} \varphi(rz)\,d\sigma(z)
\le
\frac{t+n}{c}\log r,
\qquad \forall r\in(0,\varepsilon),
\end{equation}
where $d\sigma$ denotes the surface measure on the unit sphere in $\mathbb C^n$ and $\sigma_{2n-1}$ its total area.
By Jensen's inequality, \eqref{e3.1} yields
\begin{align*}
\frac{1}{\sigma_{2n-1}}
\int_{\{\|z\|=1\}} \|rz\|^{2t} e^{-2c\varphi(rz)}\,d\sigma(z)
&=
\frac{r^{2t}}{\sigma_{2n-1}}
\int_{\{\|z\|=1\}} e^{-2c\varphi(rz)}\,d\sigma(z)
\\
&\ge
r^{2t}
\exp\!\left(
-\frac{2c}{\sigma_{2n-1}}
\int_{\{\|z\|=1\}} \varphi(rz)\,d\sigma(z)
\right)
\\
&\ge r^{-2n}
\end{align*}
for all $0<r<\varepsilon$. Consequently,
\[
\int_{\{\|z\|<\delta\}} \|z\|^{2t} e^{-2c\varphi}\,dV_{2n}
\ge
\sigma_{n-1}\int_0^\delta r^{-1}\,dr
=+\infty,
\qquad \forall\,\delta\in(0,\varepsilon),
\]
and hence
\[
b=c_t(\varphi)\le \frac{t+n}{a}.
\]
This proves \eqref{e8..}. Combining \eqref{e8.} and \eqref{e8..}, we obtain
\[
a\in(0,+\infty)
\quad\Longleftrightarrow\quad
b\in(0,+\infty).
\]

\medskip

Finally, by Lemma~\ref{le3.}, we have
\[
a
=
c_{|t|}\!\left(\varphi+\frac{|t|-t}{a}\log\|z\|\right).
\]
Applying Skoda's inequality \cite{Sko}, we infer that
\[
\frac{1}{\nu_{\varphi+\frac{|t|-t}{c_t(\varphi)}\log\|z\|}(0)}
\le
c_0\!\left(\varphi+\frac{|t|-t}{c_t(\varphi)}\log\|z\|\right)
\le a.
\]
Together with \eqref{e8..}, this yields
\[
\frac{1}{\nu_{\varphi+\frac{|t|-t}{c_t(\varphi)}\log\|z\|}(0)}
\le a
\le \frac{t+n}{\nu_\varphi(0)}.
\]
The proof is complete.
\end{proof}

\section{Convexity-type estimates} % for weighted log canonical thresholds} 

We begin with a stability property of the weighted log canonical threshold
under truncation by logarithmic singularities.
\begin{lemma}
\label{le4}
Let $\Omega$ be a domain in $\mathbb C^n$ containing the origin $0$, and let $\varphi$ be a plurisubharmonic function on $\Omega$ with $\nu_\varphi(0)>0$.
Assume that $s>t>-n$ are real numbers such that
\[
a:=c_t(\varphi)<c_s(\varphi).
\]
Then, for every $\gamma$ satisfying $a<\gamma<c_s(\varphi)$, the function
\[
\varphi_\gamma:=\max\!\left\{\varphi,\frac{s-t}{\gamma-a}\log\|z\|\right\}
\]
satisfies
\[
c_t(\varphi_\gamma)=a.
\]
\end{lemma}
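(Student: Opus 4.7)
The plan is to prove the two inequalities $c_t(\varphi_\gamma)\geq a$ and $c_t(\varphi_\gamma)\leq a$ separately. The first inequality is immediate: since $\varphi_\gamma\geq\varphi$, one has $e^{-2c\varphi_\gamma}\leq e^{-2c\varphi}$ for every $c\geq 0$, so if $\|z\|^{2t}e^{-2c\varphi}$ is integrable near $0$ then so is $\|z\|^{2t}e^{-2c\varphi_\gamma}$, giving $c_t(\varphi_\gamma)\geq c_t(\varphi)=a$.

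For the reverse inequality I will argue by contradiction. Suppose $c_t(\varphi_\gamma)>a$, pick $c\in(a,c_t(\varphi_\gamma))$, and use $\gamma<c_s(\varphi)$ to pick $\gamma'\in(\gamma,c_s(\varphi))$. Then on some ball $B(0,\varepsilon)\Subset\Omega$, on which we may assume $\varphi\leq 0$, both
\[
\int_{B(0,\varepsilon)}\|z\|^{2t}e^{-2c\varphi_\gamma}\,dV_{2n}<+\infty \qquad\text{and}\qquad \int_{B(0,\varepsilon)}\|z\|^{2s}e^{-2\gamma'\varphi}\,dV_{2n}<+\infty
\]
hold. Setting $\lambda:=(s-t)/(\gamma-a)$, I split $B(0,\varepsilon)=A\cup B$ with $A:=\{\varphi<\lambda\log\|z\|\}$ and $B:=\{\varphi\geq\lambda\log\|z\|\}$. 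On $B$ one has $\varphi_\gamma=\varphi$, so the first integral directly gives $\int_B\|z\|^{2t}e^{-2c\varphi}\,dV_{2n}<+\infty$; on $A$ one has $\varphi_\gamma=\lambda\log\|z\|$, so the same integral gives $\int_A\|z\|^{2(t-c\lambda)}\,dV_{2n}<+\infty$.

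The heart of the argument is a H\"older interpolation on the set $A$ between $f:=\|z\|^{2s}e^{-2\gamma'\varphi}$ and $g:=\|z\|^{2(t-c\lambda)}$. Choosing the exponents $1/\alpha$ and $1/\beta$ with $\alpha:=c\lambda/(s-t+c\lambda)\in(0,1)$, $\beta:=1-\alpha$, a short computation gives $f^{\alpha}g^{\beta}=\|z\|^{2t}e^{-2c''\varphi}$ where $c'':=\gamma'\alpha$, so H\"older's inequality produces
\[
\int_A\|z\|^{2t}e^{-2c''\varphi}\,dV_{2n}\leq\Bigl(\int_A f\,dV_{2n}\Bigr)^{\alpha}\Bigl(\int_A g\,dV_{2n}\Bigr)^{\beta}<+\infty.
\]
Combining with the integral over $B$ (and using $\varphi\leq 0$ to replace $c$ and $c''$ by $\min(c,c'')$) one obtains $\int_{B(0,\varepsilon)}\|z\|^{2t}e^{-2\min(c,c'')\varphi}\,dV_{2n}<+\infty$, which contradicts $c_t(\varphi)=a$ as soon as $\min(c,c'')>a$.

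The main (and essentially only) obstacle is verifying $c''>a$. Substituting $\alpha=c\lambda/(s-t+c\lambda)$ and using the defining identity $\lambda(\gamma-a)=s-t$ reduces the inequality $c''=\gamma'\alpha>a$ to $c(\gamma'-a)>a(\gamma-a)$, which is immediate from $c>a$ and $\gamma'>\gamma>a$. Thus the precise formula for $\lambda$ built into the definition of $\varphi_\gamma$, together with the strict inequality $\gamma<c_s(\varphi)$ providing the auxiliary $\gamma'$, is exactly what forces the H\"older-produced threshold $c''$ to exceed $a$.
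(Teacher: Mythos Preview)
Your proof is correct, but it takes a different route from the paper's. Both arguments split the ball into $A=\{\varphi<\lambda\log\|z\|\}$ and its complement $B$, with $\lambda=(s-t)/(\gamma-a)$; the difference is in how the set $A$ is handled.

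The paper works directly at the critical exponent $a$: on $A$ it writes $e^{-2a\varphi}=e^{2(\gamma-a)\varphi}e^{-2\gamma\varphi}$ and uses the defining inequality of $A$ to get the pointwise bound $\|z\|^{2t}e^{-2a\varphi}\le \|z\|^{2s}e^{-2\gamma\varphi}$, so $\int_A\|z\|^{2t}e^{-2a\varphi}<\infty$. This shows $\int_{B(0,\varepsilon)}\|z\|^{2t}\bigl(e^{-2a\varphi}-e^{-2a\varphi_\gamma}\bigr)<\infty$, and then the \emph{strong openness conjecture} (that $\|z\|^{2t}e^{-2a\varphi}$ is not integrable near $0$) forces $\|z\|^{2t}e^{-2a\varphi_\gamma}$ to be non-integrable, giving $c_t(\varphi_\gamma)\le a$. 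Your argument instead assumes $c_t(\varphi_\gamma)>a$, extracts the auxiliary integral $\int_A\|z\|^{2(t-c\lambda)}<\infty$, and interpolates via H\"older to produce an exponent $c''>a$ with $\int_A\|z\|^{2t}e^{-2c''\varphi}<\infty$, contradicting $c_t(\varphi)=a$. The paper's route is shorter and avoids H\"older, but it leans on strong openness, a deep theorem; your route is a bit longer but is self-contained in the sense that it uses only the definition of $c_t$. Incidentally, your auxiliary $\gamma'$ is not really needed: with $\gamma'=\gamma$ the inequality $c''>a$ reduces to $c>a$, which you already have.
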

\begin{proof}
By Theorem~\ref{le3..}, we have $a\in(0,+\infty)$.
Fix $r\in(0,1)$ such that $B(0,r)\Subset\Omega$ and
\begin{equation}
\label{e4}
\int_{B(0,r)} \|z\|^{2s} e^{-2\gamma\varphi}\,dV_{2n}<+\infty.
\end{equation}

Let $\varepsilon\in(0,r)$. Since $\varphi_\gamma=\varphi$ outside the set
$\{\varphi<\frac{s-t}{\gamma-a}\log\|z\|\}$, we obtain
\begin{equation}
\label{e5}
\begin{split}
\int_{B(0,\varepsilon)}\|z\|^{2t}\bigl(e^{-2a\varphi}-e^{-2a\varphi_\gamma}\bigr)\,dV_{2n}
&\leq
\int_{B(0,\varepsilon)\cap\{\varphi<\frac{s-t}{\gamma-a}\log\|z\|\}}
\|z\|^{2t} e^{-2a\varphi}\,dV_{2n} \\
&=
\int_{B(0,\varepsilon)\cap\{(\gamma-a)\varphi<(s-t)\log\|z\|\}}
\|z\|^{2t} e^{2(\gamma-a)\varphi-2\gamma\varphi}\,dV_{2n} \\
&\leq
\int_{B(0,\varepsilon)}
\|z\|^{2s} e^{-2\gamma\varphi}\,dV_{2n} \\
&\leq
\int_{B(0,r)}
\|z\|^{2s} e^{-2\gamma\varphi}\,dV_{2n}<+\infty,
\end{split}
\end{equation}
where the last inequality follows from \eqref{e4}.

Since $\|z\|^{2t}e^{-2a\varphi}$ is not integrable in any neighborhood of $0$, it follows from \eqref{e5} that $\|z\|^{2t}e^{-2a\varphi_\gamma}$ is also not integrable near $0$. Hence,
\[
c_t(\varphi_\gamma)\leq a.
\]
On the other hand, since $\varphi\leq\varphi_\gamma$, we have
\[
a=c_t(\varphi)\leq c_t(\varphi_\gamma).
\]
Combining the two inequalities yields $c_t(\varphi_\gamma)=a$, as claimed.
This completes the proof.
\end{proof}

The following elementary estimate provides a lower bound for
the weighted log canonical threshold after taking a maximum with
a logarithmic weight.
\begin{lemma}
\label{le1}
Let $\Omega$ be a domain in $\mathbb C^n$ containing the origin $0$, and let $\varphi$ be a plurisubharmonic function on $\Omega$.
Assume that $s>t>-n$ are real numbers.
Then, for any $a>0$, one has
\[
c_s\bigl(\max\{\varphi,a\log\|z\|\}\bigr)
\geq c_t(\varphi)+\frac{s-t}{a}.
\]
\end{lemma}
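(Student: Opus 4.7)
The plan is to replace the two pieces of the $\max$ by a single weighted lower bound for $\psi := \max\{\varphi, a\log\|z\|\}$ and then compare the integrand at exponent $s$ with the integrand at exponent $t$. Concretely, set $\alpha := (s-t)/a > 0$ and note the elementary pointwise inequality
$$
c\,\varphi + \alpha\,(a\log\|z\|) \;\leq\; (c+\alpha)\,\psi
$$
valid for every $c > 0$, since $\psi \geq \varphi$ and $\psi \geq a\log\|z\|$ and the coefficients $c, \alpha$ are positive. Exponentiating yields
$$
\|z\|^{2s}\, e^{-2(c+\alpha)\psi} \;\leq\; \|z\|^{2s-2a\alpha}\, e^{-2c\varphi} \;=\; \|z\|^{2t}\, e^{-2c\varphi},
$$
where the equality uses $s - a\alpha = t$.

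Next, I fix any $c\in (0, c_t(\varphi))$. By the definition of $c_t(\varphi)$ there is a neighborhood of $0$ on which $\|z\|^{2t}e^{-2c\varphi}$ is integrable, and the displayed inequality then forces $\|z\|^{2s}e^{-2(c+\alpha)\psi}$ to be integrable near $0$ as well. Hence $c_s(\psi) \geq c + \alpha = c + (s-t)/a$.

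Letting $c \nearrow c_t(\varphi)$ gives the desired bound
$$
c_s(\psi) \;\geq\; c_t(\varphi) + \frac{s-t}{a}.
$$
There is no real obstacle here: the whole argument reduces to the convex-combination inequality $c\varphi + \alpha \cdot a\log\|z\| \leq (c+\alpha)\max\{\varphi, a\log\|z\|\}$, with the choice $\alpha = (s-t)/a$ being dictated by the requirement that the power of $\|z\|$ on the right-hand side after exponentiation equal $2t$. The only minor subtlety to mention is that when $c_t(\varphi)=+\infty$ the conclusion is automatic (since we can take $c$ arbitrarily large in the above), and when $c_t(\varphi)$ is finite, Theorem \ref{le3..} ensures it is positive so the limit $c \nearrow c_t(\varphi)$ is meaningful.
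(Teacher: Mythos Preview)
Your proof is correct and in fact cleaner than the paper's. Both arguments ultimately establish the same pointwise comparison
\[
\|z\|^{2s}\,e^{-2(c+\alpha)\psi}\ \leq\ \|z\|^{2t}\,e^{-2c\varphi},\qquad \alpha=\tfrac{s-t}{a},
\]
but they get there differently. You observe directly that $\psi\geq\varphi$ and $\psi\geq a\log\|z\|$, so any positive combination $c\varphi+\alpha\,a\log\|z\|$ is bounded above by $(c+\alpha)\psi$; exponentiating and multiplying by $\|z\|^{2s}$ finishes in one line. The paper instead introduces the conjugate exponent $x=1+\tfrac{ab}{s-t}$, applies Young's inequality to $\|z\|^{2\varepsilon(s-t)}e^{2\varepsilon b\varphi}$, and then uses the comparison $e^{2C\max\{\varphi,a\log\|z\|\}}\asymp (e^{2\varphi}+\|z\|^{2a})^{C}$ to reach the same estimate up to a multiplicative constant. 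Substituting $c=\varepsilon b$ in your inequality recovers exactly the paper's estimate (without the extraneous constant $B$), so your route is a genuine simplification: it bypasses Young's inequality and the sum-versus-max comparison entirely. The paper's approach does not gain any extra information in return; it is simply a more roundabout derivation of the same bound.
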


\begin{proof}
If $b:=c_t(\varphi)=+\infty$, the assertion is immediate. We therefore assume that $b\in(0,+\infty)$.
Set
\[
x:=1+\frac{ab}{s-t}.
\]
Choose $\varepsilon\in\bigl(1-\frac{1}{x},1\bigr)$. Since $\varepsilon b<b=c_t(\varphi)$, there exists $r\in(0,1)$ such that $B(0,r)\Subset\Omega$ and
\begin{equation}
\label{e1}
\int_{B(0,r)} \|z\|^{2t} e^{-2\varepsilon b\varphi}\,dV_{2n}<+\infty.
\end{equation}
By H\"older's inequality, we have
\[
\|z\|^{2\varepsilon(s-t)} e^{2\varepsilon b\varphi}
\leq \frac{1}{x}\|z\|^{2\varepsilon(s-t)x}
+\frac{x-1}{x}e^{2\varepsilon\frac{bx}{x-1}\varphi},
\]
which yields
\begin{equation}
\label{e2}
\begin{split}
\frac{1}{e^{2\varepsilon\frac{bx}{x-1}\varphi}+\|z\|^{2\varepsilon(s-t)x}}
&\leq
\frac{1}{\frac{x-1}{x}e^{2\varepsilon\frac{bx}{x-1}\varphi}
+\frac{1}{x}\|z\|^{2\varepsilon(s-t)x}} \\
&\leq \|z\|^{-2\varepsilon(s-t)} e^{-2\varepsilon b\varphi}.
\end{split}
\end{equation}
Next, observe that for fixed $\varepsilon$, $a$, and $b$, there exist positive constants $A$ and $B$ such that
\begin{equation}
\label{e3}
\begin{split}
e^{2\varepsilon\frac{bx}{x-1}\max\{\varphi,a\log\|z\|\}}
&\geq A\bigl(e^{2\varphi}+\|z\|^{2a}\bigr)^{\varepsilon\frac{bx}{x-1}} \\
&\geq B\bigl(e^{2\varepsilon\frac{bx}{x-1}\varphi}
+\|z\|^{2\varepsilon(s-t)x}\bigr).
\end{split}
\end{equation}
Combining \eqref{e2} and \eqref{e3}, we deduce that for $0<\|z\|<1$,
\[
\|z\|^{-2(s-t)} e^{-2\varepsilon b\varphi}
\geq B^{-1} e^{-2\varepsilon\frac{bx}{x-1}\max\{\varphi,a\log\|z\|\}}.
\]
Therefore, by \eqref{e1},
\begin{align*}
\int_{B(0,r)} \|z\|^{2s}
e^{-2\varepsilon\frac{bx}{x-1}\max\{\varphi,a\log\|z\|\}}\,dV_{2n}
\leq
B\int_{B(0,r)} \|z\|^{2t} e^{-2\varepsilon b\varphi}\,dV_{2n}
<+\infty.
\end{align*}
This implies
\[
c_s\bigl(\max\{\varphi,a\log\|z\|\}\bigr)
\geq \frac{bx}{x-1}
= c_t(\varphi)+\frac{s-t}{a}.
\]
The proof is complete.
\end{proof}

We next consider the case of functions in the energy class $\mathcal E(\Omega)$,
for which the complex Monge-Amp\`ere mass at the origin is well-defined.

\begin{lemma}
\label{le4.}
Let $\Omega$ be a bounded hyperconvex domain in $\mathbb C^n$ containing the origin $0$, and let $\varphi\in\mathcal E(\Omega)$ satisfy $\nu_\varphi(0)>0$.
Let $t>-n$ be a real number and define
\[
\varphi_t(z):=\varphi(z)+\frac{|t|-t}{a}\log\|z\|,\qquad z\in\Omega.
\]
Then
\[
e_n(\varphi_t):=\int_{\{0\}}(dd^c\varphi_t)^n>0,
\]
and moreover
\[
c_t(\varphi)<c_s(\varphi),\qquad \forall\, s>t.
\]
\end{lemma}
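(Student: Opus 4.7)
The plan is to split Lemma~\ref{le4.} into two assertions: positive finiteness of $e_n(\varphi_t)$, and the strict monotonicity $c_t(\varphi)<c_s(\varphi)$ for all $s>t$. By Theorem~\ref{le3..}, $a:=c_t(\varphi)\in(0,+\infty)$, and Lemma~\ref{le3.} then yields $c_{|t|}(\varphi_t)=a$. Writing $\lambda:=(|t|-t)/a\geq 0$, the elementary identity $\liminf_{z\to 0}(\lambda\log\|z\|)/\log\|z\|=\lambda$ gives
$$
\nu_{\varphi_t}(0)=\nu_\varphi(0)+\lambda\geq\nu_\varphi(0)>0.
$$

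For the positivity of $e_n(\varphi_t)$, I would invoke the fact that $\varphi\in\mathcal E(\Omega)$ and $\lambda\log\|z\|$ (after the usual negativity normalization) both belong to $\mathcal E(\Omega)$---the latter via the decreasing sequence of bounded cutoffs $\max(\log\|z\|-\log R,-j)$ of uniformly bounded Monge--Amp\`ere mass---so that $\varphi_t\in\mathcal E(\Omega)$. This yields finiteness $e_n(\varphi_t)\leq\int_\Omega (dd^c\varphi_t)^n<+\infty$, while Demailly's comparison of Lelong numbers with Monge--Amp\`ere masses supplies the lower bound $e_n(\varphi_t)\geq\nu_{\varphi_t}(0)^n>0$.

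For the strict inequality $c_t(\varphi)<c_s(\varphi)$, I would apply Hong's inequality from \cite{Hong19} (recalled in the introduction) to $\varphi_t$ at the nonnegative level $|t|$. With $c_{|t|}(\varphi_t)=a$ and $e_n(\varphi_t)\in(0,+\infty)$, this gives
$$
c_{|t|+s'}(\varphi_t)-a\;\geq\;\frac{s'(n-1)^{n-1}}{a^{n-1}e_n(\varphi_t)}>0
$$
for any $s'\in\bigl(0,\,a^ne_n(\varphi_t)/(n-1)^n\bigr]$, so some $c^*>a$ makes $\|z\|^{2(|t|+s')}e^{-2c^*\varphi_t}$ integrable near $0$. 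Substituting $\varphi_t=\varphi+\lambda\log\|z\|$ shows this integrand equals $\|z\|^{2(t+s')-2(c^*-a)\lambda}e^{-2c^*\varphi}$, whose weight exponent is at most $2(t+s')$; since $\|z\|^\beta$ decreases in $\beta$ on the unit ball, domination then yields integrability of $\|z\|^{2(t+s')}e^{-2c^*\varphi}$, hence $c_{t+s'}(\varphi)\geq c^*>a$. For an arbitrary $s>t$, taking $s':=\min\{s-t,\,a^ne_n(\varphi_t)/(n-1)^n\}>0$ gives $t+s'\leq s$, and the non-strict monotonicity of $c_\bullet(\varphi)$ in its weight exponent then yields $c_s(\varphi)\geq c_{t+s'}(\varphi)>a=c_t(\varphi)$.

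The main obstacle is twofold: first, carefully justifying $\varphi_t\in\mathcal E(\Omega)$ so that Demailly's comparison and Hong's inequality both apply to $\varphi_t$; and second, carrying out the $\|z\|^\beta$ weight bookkeeping in the transfer of integrability from $\varphi_t$ back to $\varphi$ without losing strictness at arbitrary $s>t$.
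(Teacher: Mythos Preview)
Your proposal is correct and follows essentially the same route as the paper: both arguments pass to $\varphi_t$, use Lemma~\ref{le3.} to identify $c_{|t|}(\varphi_t)=a$, establish $e_n(\varphi_t)>0$ via the Lelong number (the paper cites \cite[Lemma~4.4]{ACCH}, which is your Demailly comparison), and then apply Hong's inequality from \cite{Hong19} at level $|t|$ to get strict increase. The only cosmetic difference is in the transfer back to $\varphi$: the paper applies Lemma~\ref{le3.} a second time to write $c_s(\varphi)=c_{s+b(|t|-t)/a}(\varphi_t)$ and observes that this index exceeds $|t|$, whereas you unwind the substitution $\varphi_t=\varphi+\lambda\log\|z\|$ by hand and compare $\|z\|$-weights on the unit ball---both are valid and equally short.
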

\begin{proof}
By Lemma~4.4 in \cite{ACCH}, we have
\[
0<\nu_\varphi(0)
\leq
\left(\int_{\{0\}}(dd^c\varphi)^n\right)^{\frac1n}
\left(\int_{\{0\}}(dd^c\log\|z\|)^n\right)^{\frac{n-1}{n}},
\]
which immediately implies
\[
e_n(\varphi):=\int_{\{0\}}(dd^c\varphi)^n>0.
\]
Set
\[
a:=c_t(\varphi)
\quad\text{and}\quad
b:=c_s(\varphi).
\]
Since $\varphi\in\mathcal E(\Omega)$, it follows that
\[
\varphi_t\in\mathcal E(\Omega\cap B(0,1)),
\]
and hence
\[
0<e_n(\varphi)\leq e_n(\varphi_t):=\int_{\{0\}}(dd^c\varphi_t)^n.
\]
By Lemma~\ref{le3.}, we have $0<a\leq b<+\infty$, together with the identities
\begin{equation}
\label{e7}
a=c_{|t|}(\varphi_t),
\qquad
b=c_{\,s+\frac{b(|t|-t)}{a}}(\varphi_t).
\end{equation}
We now invoke Theorem~1.1 of \cite{Hong19}, which yields
\[
c_{|t|+\varepsilon}(\varphi_t)
\geq
c_{|t|}(\varphi_t)
+\varepsilon\,\frac{(n-1)^{n-1}}{c_{|t|}(\varphi_t)^{\,n-1}e_n(\varphi_t)},
\]
for all
\[
0\leq \varepsilon \leq \frac{c_{|t|}(\varphi_t)^n e_n(\varphi_t)}{(n-1)^n}.
\]
In particular, this implies
\[
c_{|t|+\varepsilon}(\varphi_t)>a,
\qquad
\forall\,0<\varepsilon\leq \frac{a^n e_n(\varphi_t)}{(n-1)^n}.
\]
Combining this with \eqref{e7}, we conclude that $b>a$, since
\[
s+\frac{b(|t|-t)}{a}>|t|.
\]
This completes the proof.
\end{proof}

Combining the previous lemmas, we obtain a comparison inequality
between $c_s(\varphi)$ and $c_t(\varphi)$.

\begin{proposition}
\label{th1}
Let $\Omega$ be a bounded hyperconvex domain in $\mathbb C^n$ containing the origin $0$, and let $\varphi$ be a plurisubharmonic function on $\Omega$. Then
\[
c_s(\varphi)\leq \frac{s+n}{t+n}\,c_t(\varphi),
\qquad \forall\, s>t>-n.
\]
\end{proposition}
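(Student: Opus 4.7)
The plan is to prove the inequality by a direct Hölder argument applied to the defining integrals. First I would dispose of the trivial case: when $c_s(\varphi)=+\infty$, Theorem \ref{le3..} forces $\nu_\varphi(0)=0$, and the argument \eqref{e8.} inside its proof (reducing to Skoda's bound for $c_0$) then gives $c_t(\varphi)=+\infty$ as well, so the inequality holds vacuously. Hence I may assume $b:=c_s(\varphi)$ is finite.

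Next, fix any $c\in(0,b)$ and $\varepsilon>0$ with $\overline{B(0,\varepsilon)}\subset\Omega$ and
$$
\int_{B(0,\varepsilon)}\|z\|^{2s}e^{-2c\varphi}\,dV_{2n}<+\infty.
$$
Choose a real number $p>\tfrac{s+n}{t+n}$, let $q=p/(p-1)$ be its conjugate exponent, and factor
$$
\|z\|^{2t}e^{-(2c/p)\varphi}=\bigl(\|z\|^{2s/p}e^{-(2c/p)\varphi}\bigr)\cdot\|z\|^{2(t-s/p)}.
$$
Hölder's inequality on $B(0,\varepsilon)$ then bounds the integral of the left side by a product of two integrals: the first factor raised to the $p$th power is $\|z\|^{2s}e^{-2c\varphi}$, which is integrable by the choice of $c$, and the second factor raised to the $q$th power is $\|z\|^{2q(t-s/p)}$, which is integrable near $0$ precisely when $q(t-s/p)>-n$. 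A routine manipulation shows this last inequality is equivalent to $p(t+n)>s+n$, which is exactly the chosen range of $p$.

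It follows that $\|z\|^{2t}e^{-(2c/p)\varphi}$ is integrable near the origin, hence $c_t(\varphi)\geq c/p$. Letting $c\nearrow b$ and $p\searrow(s+n)/(t+n)$ yields $c_t(\varphi)\geq\tfrac{t+n}{s+n}\,c_s(\varphi)$, which rearranges to the stated inequality. I do not foresee any substantial obstacle: the only point requiring care is the verification of $q(t-s/p)>-n$, and the inequality $t+n>0$ used to rearrange it is ensured by the hypothesis $t>-n$. Note also that the assumption that $\Omega$ is bounded hyperconvex does not appear to be needed for this particular statement, so the proof should go through for arbitrary plurisubharmonic $\varphi$ on a domain containing the origin.
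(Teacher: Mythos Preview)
Your argument is correct and considerably more direct than the paper's. The paper proceeds in two stages: first it treats the special case $\varphi\in\mathcal E(\Omega)$ by invoking Lemma~\ref{le4.} (to guarantee $c_t(\varphi)<c_s(\varphi)$, which in turn relies on the estimate from \cite{Hong19}) together with Lemma~\ref{le4} (to compare $c_t(\varphi)$ with the threshold of a logarithmic pole), and then reduces the general plurisubharmonic case to this one by the shift of Lemma~\ref{le3.}, an approximation $\psi_j=\max\{\psi,j\log\|z\|\}$, and the subadditivity result of \cite{TT}. Your approach bypasses all of this: the single H\"older factorization
\[
\|z\|^{2t}e^{-(2c/p)\varphi}=\bigl(\|z\|^{2s/p}e^{-(2c/p)\varphi}\bigr)\cdot\|z\|^{2(t-s/p)}
\]
with $p>(s+n)/(t+n)$ already gives $c_t(\varphi)\ge c/p$, and the limit $c\nearrow c_s(\varphi)$, $p\searrow(s+n)/(t+n)$ finishes the job. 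The verification $q(t-s/p)>-n\iff p(t+n)>s+n$ is a straightforward rearrangement using $p-1>0$ and $t+n>0$, exactly as you say. Your observation that the bounded hyperconvex hypothesis is superfluous for this proposition is also correct; the paper presumably states it in that setting only because the surrounding lemmas (in particular Lemma~\ref{le4.}) need $\mathcal E(\Omega)$. What the paper's route buys is a structural link between the threshold inequality and the auxiliary functions $\varphi_\gamma$ of Lemma~\ref{le4}, which are reused in the proof of Theorem~\ref{th2}; what your route buys is a self-contained one-paragraph proof requiring nothing beyond H\"older and the elementary fact (Theorem~\ref{le3..}) that $c_s(\varphi)=+\infty$ forces $c_t(\varphi)=+\infty$.
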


\begin{proof}
By shrinking $\Omega$ if necessary, we may assume without loss of generality that $\Omega\subset B(0,1)$ and that $\varphi<0$ on $\Omega$.
If $\nu_\varphi(0)=0$, then Theorem~\ref{le3..} yields
\[
c_t(\varphi)=c_s(\varphi)=+\infty,
\]
and the desired inequality is trivially satisfied. Hence, we may assume throughout that $\nu_\varphi(0)>0$.
Set
\[
a:=c_t(\varphi)
\quad\text{and}\quad
b:=c_s(\varphi).
\]

We first treat the case where $\varphi\in\mathcal E(\Omega)$. By Lemmas~\ref{le4} and~\ref{le4.}, we have $0<a<b<+\infty$. Moreover, for every $\gamma\in(a,b)$, the function
\[
\varphi_\gamma:=\max\Bigl\{\varphi,\frac{s-t}{\gamma-a}\log\|z\|\Bigr\}
\]
satisfies
\[
a=c_t(\varphi_\gamma)
\geq c_t\!\left(\frac{s-t}{\gamma-a}\log\|z\|\right)
=\frac{(t+n)(\gamma-a)}{s-t}.
\]
Letting $\gamma\uparrow b$, we obtain
\[
a\geq \frac{(t+n)(b-a)}{s-t},
\]
which is equivalent to
\[
b\leq \frac{(s+n)a}{t+n}.
\]

We now turn to the general case. Let $m\geq t$ be an integer and define
\[
\psi:=\varphi+\frac{m-t}{a}\log\|z\|
\quad \text{on } \Omega.
\]
By Lemma~\ref{le3.}, we have
\begin{equation}
\label{e9}
a=c_m(\psi)
\quad\text{and}\quad
b=c_{\,s+\frac{b(m-t)}{a}}(\psi).
\end{equation}
For each $j\geq1$, set
\[
\psi_j(z):=\max\{\psi(z),\,j\log\|z\|\},\qquad z\in\Omega.
\]
By Theorem~2.1 in \cite{TT}, we have
\begin{equation}
\label{e10}
c_m(\psi)\leq c_m(\psi_j)
\leq c_m(\psi)+c_m(j\log\|z\|)
= a+\frac{m+n}{j}.
\end{equation}
Since $\nu_\psi(0)\geq \nu_\varphi(0)>0$, it follows from \eqref{e10} and Theorem~\ref{le3..} that
\[
\lim_{j\to\infty} c_m(\psi_j)=c_m(\psi)\leq \frac{m+n}{\nu_\psi(0)}<+\infty.
\]
Consequently, there exists $j_0\geq1$ such that
\[
c_m(\psi_j)\leq \frac{m+2n}{\nu_\psi(0)},\qquad \forall\, j\geq j_0.
\]
Applying Theorem~\ref{le3..} once again, we deduce that
\[
\nu_{\psi_j}(0)>0,\qquad \forall\, j\geq j_0.
\]
Since $\psi_j\in\mathcal E(\Omega)$ and $s+\frac{b(m-t)}{a}>m$, the estimate obtained in the first part of the proof applies and yields
\[
c_{\,s+\frac{b(m-t)}{a}}(\psi)
\leq c_{\,s+\frac{b(m-t)}{a}}(\psi_j)
\leq \frac{\bigl(s+\frac{b(m-t)}{a}+n\bigr)c_m(\psi_j)}{m+n},
\]
for all $j\geq j_0$. Combining this with \eqref{e9} and \eqref{e10}, we arrive at
\[
b\leq \frac{\bigl(s+\frac{b(m-t)}{a}+n\bigr)a}{m+n}.
\]
A straightforward rearrangement then gives
\[
b\leq \frac{(s+n)a}{t+n},
\]
which completes the proof.
\end{proof}

We now turn to the proof of Theorem~\ref{th2}.
\begin{proof}[Proof of Theorem~\ref{th2}]
We begin by establishing the inequality
\[
\frac{c_r(\varphi)-c_s(\varphi)}{r-s}
\leq
\frac{c_s(\varphi)-c_t(\varphi)}{s-t},
\qquad \forall\, r>s>t>-n.
\]
If $c_s(\varphi)=c_r(\varphi)$, the assertion is immediate. We therefore assume that
\[
c_s(\varphi)<c_r(\varphi).
\]
Fix $\gamma\in\bigl(c_s(\varphi),\,c_r(\varphi)\bigr)$. By Lemma~\ref{le4}, we have
\[
c_s(\varphi)
=
c_s\!\left(
\max\left\{
\varphi,\,
\frac{r-s}{\gamma-c_s(\varphi)}\log\|z\|
\right\}
\right).
\]
Applying Lemma~\ref{le1} to the right-hand side yields
\[
c_s(\varphi)
\geq
c_t(\varphi)
+
\frac{s-t}{\frac{r-s}{\gamma-c_s(\varphi)}}.
\]
Rearranging this inequality gives
\[
\frac{\gamma-c_s(\varphi)}{r-s}
\leq
\frac{c_s(\varphi)-c_t(\varphi)}{s-t}.
\]
Letting $\gamma\uparrow c_r(\varphi)$, we obtain
\[
\frac{c_r(\varphi)-c_s(\varphi)}{r-s}
\leq
\frac{c_s(\varphi)-c_t(\varphi)}{s-t},
\]
which proves the claim.

As a consequence, for any $p>q>s>t>-n$, we have the chain of inequalities
\[
0
\leq
\frac{c_p(\varphi)-c_q(\varphi)}{p-q}
\leq
\frac{c_q(\varphi)-c_s(\varphi)}{q-s}
\leq
\frac{c_s(\varphi)-c_t(\varphi)}{s-t}.
\]
Finally, combining Theorem~\ref{le3..} with Proposition~\ref{th1}, we infer that
\begin{align*}
0
&\leq
\frac{c_p(\varphi)-c_q(\varphi)}{p-q}
\leq
\frac{c_s(\varphi)-c_t(\varphi)}{s-t}
\\
&\leq
\frac{\frac{s+n}{t+n}c_t(\varphi)-c_t(\varphi)}{s-t}
=
\frac{c_t(\varphi)}{t+n}
\leq
\frac{1}{\nu_\varphi(0)}.
\end{align*}
This completes the proof.
\end{proof}

\section{Weighted integrability via restriction to complex lines}
 
We begin with a local integrability estimate under a restriction on the
Lelong number along a fixed complex line.
\begin{lemma}
\label{bd1}
Let $\Omega$ be a domain in $\mathbb C^n$ containing the origin $0$, and let $l$ be the complex line defined by
\[
l=\{(z_1,\ldots,z_n)\in\mathbb C^n : z_k=0,\ 2\leq k\leq n\}.
\]
Assume that $t\geq0$ is an integer and that $\varphi$ is a plurisubharmonic function on $\Omega$ such that
\[
\nu_{\varphi|_l}(0)<t+1
\]
and
\[
\varphi(z)\geq a\log\|z\|-b \quad \text{on }\Omega
\]
for some constants $a,b>0$. Then there exists $\gamma\in(0,1)$ such that $\Delta^n(0,\gamma)\Subset\Omega$ and
\[
\int_{\Delta^n(0,\gamma)}
\max\{\gamma|z_1|^{2t}-\|z\|^{2t+2},0\}
\,e^{-2\varphi}\,dV_{2n}<+\infty.
\]
\end{lemma}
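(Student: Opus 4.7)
The plan is to perform a blow-up type substitution $z_1 = w_1$, $z_k = w_1 w_k$ for $k = 2, \ldots, n$ that straightens the weight against the line $l$, and then to exploit the one-dimensional integrability of $|z_1|^{2t} e^{-2\varphi|_l}$ near $0$ uniformly in the transversal parameter $w'$. The Jacobian is $|w_1|^{2(n-1)}$ and, using $\|z\|^2 = |w_1|^2(1+\|w'\|^2)$, the integral transforms to
\begin{equation*}
I = \int |w_1|^{2t+2n-2}\bigl(\gamma - |w_1|^2(1+\|w'\|^2)^{t+1}\bigr)_+ e^{-2\tilde\varphi(w)}\,dV(w),
\end{equation*}
where $\tilde\varphi(w) := \varphi(w_1, w_1 w')$ is plurisubharmonic in $w$, restricts on $\{w' = 0\}$ to $\varphi|_l$, and the support is contained in $\{|w_1|(1+\|w'\|^2)^{(t+1)/2} < \sqrt{\gamma}\}$.

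Next I would split the $w'$-integration into a near region $\{\|w'\| \leq M\}$ and a far region $\{\|w'\| > M\}$. On the near region, upper semi-continuity of the Lelong numbers $\nu(w') := \nu_{\tilde\varphi(\cdot, w')}(0) = \nu_{\varphi|_{l_{w'}}}(0)$, where $l_{w'}$ is the line through $0$ in direction $(1, w')$, combined with the hypothesis $\nu(0) = \nu_{\varphi|_l}(0) < t+1 \leq t+n$, gives $\nu(w') < t+n$ uniformly for $\|w'\| \leq M$ once $M$ is small enough. The strong openness theorem \cite{GZ, Hie14}, combined with $L^1_{loc}$-continuity of the family $w' \mapsto \tilde\varphi(\cdot, w')$ at $w' = 0$, then yields a uniform one-dimensional bound
\begin{equation*}
\int_{|w_1| < r_0} |w_1|^{2t+2n-2} e^{-2\tilde\varphi(w_1, w')}\,dA(w_1) \leq C, \qquad \|w'\| \leq M,
\end{equation*}
which integrates trivially over the compact region. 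On the far region, the support constraint $|w_1| < \sqrt{\gamma}(1+\|w'\|^2)^{-(t+1)/2}$ confines the $w_1$-integral to a vanishingly small disk; combined with $\tilde\varphi(w) \geq a\log|w_1| + (a/2)\log(1+\|w'\|^2) - b$ (so that $e^{-2\tilde\varphi(w)} \leq e^{2b}|w_1|^{-2a}(1+\|w'\|^2)^{-a}$), the joint decay in $\|w'\|$ of the support and the exponential factor yields a convergent tail contribution once $\gamma$ is chosen small enough.

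The main obstacle is obtaining the uniform one-dimensional bound on the near region: it amounts to a quantitative form of strong openness for the family $\tilde\varphi(\cdot, w')$ converging in $L^1_{loc}$ to $\tilde\varphi(\cdot, 0) = \varphi|_l$, which follows from lower semi-continuity of the weighted log canonical threshold under such limits, via the strong openness theorem of \cite{GZ, Hie14}. A subsidiary issue is controlling the far region when $a$ is large: here one combines the pointwise lower bound on $\varphi$ with Siu's theorem on analyticity of Lelong level sets to ensure the bad set $\{w' : \nu(w') \geq t+n\}$ is a proper analytic (hence measure-zero) subset, so that $\gamma$ can be coordinated to satisfy both $\Delta^n(0, \gamma) \Subset \Omega$ and convergence of the tail estimate.
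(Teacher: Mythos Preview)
Your blow-up strategy is genuinely different from the paper's proof, which never changes variables but instead applies the Ohsawa--Takegoshi $L^2$ extension theorem twice: first to produce, for a sequence $\lambda_j\to 0$ chosen so that $\varphi|_l(\lambda_j)\le d(t+1)\log|\lambda_j|$ with $d<1$, holomorphic functions on the transversal slice $\{z_1=\lambda_j\}$ controlling $\int_{\Delta^{n-1}} e^{-2\varphi(\lambda_j,z')}$, and then to extend $z_1^t$ from those slices to holomorphic $g_j$ on a polydisc with $\int|g_j|^2e^{-2\varphi}<\infty$. A Taylor-coefficient argument (Lemma~2.1 in \cite{Hong18}) then shows $\sum_{j\le m}|g_j|^2$ dominates $(\gamma|z_1|^{2t}-\|z\|^{2t+2})_+$, which finishes the proof. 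The hypothesis on $l$ enters only through the choice of $\lambda_j$; nothing is assumed about other lines.

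Your approach, by contrast, has a genuine gap in the far region. When $a\ge t+n$, the inner integral
\[
\int_{|w_1|<R(w')} |w_1|^{2(t+n-1)}\, e^{-2\tilde\varphi(w_1,w')}\,dA(w_1)
\]
cannot be bounded using $e^{-2\tilde\varphi}\le e^{2b}|w_1|^{-2a}(1+\|w'\|^2)^{-a}$: the resulting $|w_1|^{2(t+n-1-a)}$ is not locally integrable at $0$, regardless of how small $\gamma$ (hence $R(w')$) is. Your proposed fix via Siu's theorem does not help: the divergence is caused by the crude pointwise bound being too weak for \emph{every} $w'$, not by a positive-measure set of bad directions. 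The hypothesis gives information about $\varphi$ only along the single line $l$, so for $\|w'\|>M$ you have no usable estimate on $\tilde\varphi(\cdot,w')$ beyond the global lower bound. There is also a secondary issue in the near region: the uniform one-dimensional bound you need is a \emph{weighted} effective semi-continuity statement (with weight $|w_1|^{2(t+n-1)}$), which is not the unweighted Demailly--Koll\'ar/strong openness result you cite; since $\nu_{\varphi|_l}(0)$ may exceed $1$, you cannot simply bound $|w_1|^{2(t+n-1)}$ by a constant and invoke the unweighted version.
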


\begin{proof}
We may assume without loss of generality that $\varphi<0$ on $\Omega$. Fix $r\in(0,1)$ such that $\Delta^n(0,r)\Subset\Omega$.
Since
\[
\nu_{\varphi|_l}(0)
=\liminf_{\lambda\to0}\frac{\varphi|_l(\lambda)}{\log|\lambda|}
<t+1,
\]
there exist a constant $d\in(0,1)$ and a sequence $\lambda_j\in\Delta(0,r/j)\setminus\{0\}$ such that
\[
\frac{\varphi|_l(\lambda_j)}{\log|\lambda_j|}
\leq d(t+1),
\qquad j\geq1.
\]
The proof is divided into two steps.

\medskip
\noindent{\em Step~1.}
We claim that there exists $\delta\in(0,r/2)$ such that
\[
\lim_{j\to+\infty}
\Bigl(
|\lambda_j|^{2t+2}
\int_{\Delta^{n-1}(0,2\delta)}
e^{-2\varphi(\lambda_j,z')}
\,dV_{2n-2}
\Bigr)=0.
\]
By the $L^2$ extension theorem of Ohsawa--Takegoshi, there exist holomorphic functions $f_j$ on $\Delta^{n-1}(0,r)$ satisfying $f_j(0)=1$ and
\begin{equation}\label{eq2121}
\int_{\Delta^{n-1}(0,r)}
|f_j|^2 e^{-2\varphi(\lambda_j,z')}
\,dV_{2n-2}
\leq A e^{-2\varphi|_l(\lambda_j)}
\leq A|\lambda_j|^{-2d(t+1)},
\end{equation}
where $A>0$ depends only on $r$ and $n$.
Since $\Re f_j$ are pluriharmonic, the functions
\[
u_j:=\min\{\Re f_1,\dots,\Re f_j\}
\]
are plurisubharmonic on $\Delta^{n-1}(0,r)$. Hence their decreasing limit
\[
u:=\inf_{j\geq1}u_j
\]
is plurisubharmonic and satisfies $u(0)=1$.
By Lemma~3.1 in \cite{E-W1}, there exists a negative plurisubharmonic function $v$ on $\Delta^{n-1}(0,s)$ such that $v(0)>-2$ and
\[
\Delta^{n-1}(0,s)\cap\{v>-2\}
\subset
\Delta^{n-1}(0,r)\cap\{u>1/2\}.
\]
Using \eqref{eq2121}, we obtain
\begin{equation}\label{eq21.}
\begin{aligned}
|\lambda_j|^{2t+2}
\!\!\int_{\Delta^{n-1}(0,s)\cap\{v>-2\}}
\!\!e^{-2\varphi(\lambda_j,z')}
\,dV_{2n-2}
&\leq
4|\lambda_j|^{2t+2}
\!\!\int_{\Delta^{n-1}(0,s)\cap\{v>-2\}}
\!\!|\Re f_j|^2 e^{-2\varphi(\lambda_j,z')}
\\
&\leq
4A|\lambda_j|^{2(1-d)(t+1)}.
\end{aligned}
\end{equation}

On the other hand, by Proposition~2.2 in \cite{Hong25}, there exists $\delta\in(0,\min\{s,r\}/2)$ such that
\[
\int_{\Delta^{n-1}(0,2\delta)\cap\{v<-1\}}
e^{-2a\log\|z'\|+2b}
\,dV_{2n-2}<+\infty.
\]
Using the growth assumption on $\varphi$, we deduce that
\[
\sup_{j\geq1}
\int_{\Delta^{n-1}(0,2\delta)\cap\{v<-1\}}
e^{-2\varphi(\lambda_j,z')}
\,dV_{2n-2}<+\infty.
\]
Combining this estimate with \eqref{eq21.} yields
\begin{equation}\label{eq211111}
\lim_{j\to+\infty}
\Bigl(
|\lambda_j|^{2t+2}
\int_{\Delta^{n-1}(0,2\delta)}
e^{-2\varphi(\lambda_j,z')}
\,dV_{2n-2}
\Bigr)=0.
\end{equation}

\medskip
\noindent{\em Step~2.}
Set
\[
\varepsilon_j:=
|\lambda_j|^{2t+2}
\int_{\Delta^{n-1}(0,2\delta)}
e^{-2\varphi(\lambda_j,z')}
\,dV_{2n-2}.
\]
Applying again the Ohsawa--Takegoshi theorem, we obtain holomorphic functions $g_j$ on $\Delta^n(0,2\delta)$ such that
$g_j(\lambda_j,z')=\lambda_j^t$ and
\begin{equation}\label{eq21}
\int_{\Delta^n(0,2\delta)}
|g_j|^2 e^{-2\varphi}
\,dV_{2n}
\leq
\frac{C\varepsilon_j}{|\lambda_j|^2},
\end{equation}
for a constant $C>0$ depending only on $r$ and $n$.
Since $|g_j|^2$ are plurisubharmonic, a standard mean value inequality yields
\begin{equation}\label{eq22}
\|g_j\|^2_{\Delta^n(0,\delta)}
\leq
\frac{D\varepsilon_j}{|\lambda_j|^2},
\end{equation}
with $D>0$ depending only on $r$ and $n$.
Writing
\[
g_j(z)=z_1^t+(z_1-\lambda_j)h_j(z),
\]
and applying the maximum principle together with \eqref{eq22}, we obtain
\begin{equation}\label{eq23}
\|\lambda_j h_j\|_{\Delta^n(0,\delta)}
\leq
\frac{\sqrt{D\varepsilon_j}}{\delta-|\lambda_j|}
+
\frac{|\lambda_j|\delta^t}{\delta-|\lambda_j|}.
\end{equation}
Expanding $h_j$ into a power series and using Cauchy's estimates, it follows from \eqref{eq211111} and \eqref{eq23} that
\[
\lim_{j\to+\infty}\lambda_j a_{j,\alpha}=0,
\qquad \forall\alpha\in\mathbb N^n.
\]
By Lemma~2.1 in \cite{Hong18}, there exist $\gamma_0\in(0,\delta)$ and $m\in\mathbb N^*$ such that
\[
\gamma_0|z_1|^t
\leq
\sum_{j=1}^m|g_j(z)|+\|z\|^{t+1},
\qquad z\in\Delta^n(0,\gamma_0).
\]
Choosing $\gamma\in(0,\gamma_0)$ sufficiently small, we obtain
\[
\max\{\gamma|z_1|^{2t}-\|z\|^{2t+2},0\}
\leq
\sum_{j=1}^m|g_j(z)|^2
\quad \text{on }\Delta^n(0,\gamma).
\]
The desired integrability now follows immediately from \eqref{eq21}.
\end{proof}

The next lemma extends the previous estimate by assuming uniform control
of the Lelong numbers along all complex lines through the origin.
\begin{lemma}
\label{bd2}
Let $\Omega$ be a domain in $\mathbb C^n$ containing the origin $0$.
Assume that $t>-n$ is a real number and that $\varphi$ is a plurisubharmonic function on $\Omega$ such that
\[
0<\nu_\varphi(0)=\nu_{\varphi|_l}(0)<t+1
\]
for every complex line $l$ passing through $0$, and that
\[
\varphi(z)\geq a\log\|z\|-b \quad \text{on }\Omega
\]
for some constants $a,b>0$. Then there exists $\gamma\in(0,1)$ such that
$\Delta^n(0,\gamma)\Subset\Omega$ and
\[
\int_{\Delta^n(0,\gamma)} \|z\|^{2t} e^{-2\varphi}\,dV_{2n}<+\infty.
\]
\end{lemma}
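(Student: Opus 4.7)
The plan is to reduce the real-exponent statement to the integer-exponent Lemma \ref{bd1} via a log-shift of $\varphi$, and then combine the resulting cone estimates along every complex line direction by a compactness argument on the unit sphere. Since $\nu_\varphi(0)<t+1$ and $\nu_\varphi(0)>0$, we have $t>-1$, so $T:=\lceil t\rceil$ is a nonnegative integer and $s:=T-t\in[0,1)$. Define
\[
\widetilde\varphi(z):=\varphi(z)+s\log\|z\|, \qquad z\in\Omega.
\]
Then $\widetilde\varphi$ is plurisubharmonic on $\Omega$, the bound $\widetilde\varphi\ge(a+s)\log\|z\|-b$ still holds, and for every complex line $l$ through $0$,
\[
\nu_{\widetilde\varphi|_l}(0)=\nu_{\varphi|_l}(0)+s=\nu_\varphi(0)+s<T+1,
\]
the last inequality being equivalent to the hypothesis $\nu_\varphi(0)<t+1=T+1-s$. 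Hence $\widetilde\varphi$ satisfies the hypotheses of Lemma \ref{bd1} with integer exponent $T$ along every line through $0$.

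Next, for each $v\in S^{2n-1}$, the line $\mathbb{C}v$ is turned into the $z_1$-axis by a unitary change of coordinates, which preserves $\|z\|$, plurisubharmonicity, and all Lelong numbers. Applying Lemma \ref{bd1} to $\widetilde\varphi$ yields some $\gamma_v\in(0,1)$ with $\Delta^n(0,\gamma_v)\Subset\Omega$ and
\[
\int_{\Delta^n(0,\gamma_v)}\max\bigl\{\gamma_v|\langle z,v\rangle|^{2T}-\|z\|^{2T+2},\,0\bigr\}\,e^{-2\widetilde\varphi}\,dV_{2n}<+\infty.
\]
Since $e^{-2\widetilde\varphi}=\|z\|^{-2s}e^{-2\varphi}$, on the cone $C_v:=\{|\langle z,v\rangle|\ge\tfrac12\|z\|\}$ and for $\|z\|^2\le\gamma_v/(2\cdot 4^T)$ an elementary estimate yields
\[
\max\bigl\{\gamma_v|\langle z,v\rangle|^{2T}-\|z\|^{2T+2},\,0\bigr\}\|z\|^{-2s}\;\ge\;c_v\,\|z\|^{2t}, \qquad c_v:=\frac{\gamma_v}{2\cdot 4^T}>0,
\]
using $T-s=t$.

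Finally, by compactness of $S^{2n-1}$ we pick finitely many unit vectors $v_1,\dots,v_N$ so that $\bigcup_j C_{v_j}=\mathbb{C}^n\setminus\{0\}$, and choose $\gamma\in(0,1)$ small enough that $\Delta^n(0,\gamma)$ lies inside every rotated polydisc $\Delta^n(0,\gamma_{v_j})$ and inside each smallness ball from the previous step. Then every point of $\Delta^n(0,\gamma)\setminus\{0\}$ belongs to some $C_{v_j}$, whence
\[
\|z\|^{2t}e^{-2\varphi}\;\le\;\frac{1}{\min_j c_{v_j}}\sum_{j=1}^{N}\max\bigl\{\gamma_{v_j}|\langle z,v_j\rangle|^{2T}-\|z\|^{2T+2},\,0\bigr\}\|z\|^{-2s}e^{-2\varphi}.
\]
Integrating over $\Delta^n(0,\gamma)$ and invoking the previous step for each $v_j$ gives the required integrability. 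The main conceptual hurdle is the real-versus-integer mismatch: Lemma \ref{bd1} crucially uses the integrality of the exponent, since its construction extends $z_1^T$ as a holomorphic function. The log-shift $\varphi\mapsto\widetilde\varphi$ is what makes this reduction possible, and the strict hypothesis $\nu_\varphi(0)<t+1$ is precisely what keeps the shifted Lelong number below the new integer threshold $T+1$. The cone/covering argument and the uniformity of $\gamma_v,c_v$ across finitely many directions are then routine.
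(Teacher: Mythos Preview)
Your proof is correct and follows the same strategy as the paper: shift $\varphi$ by a multiple of $\log\|z\|$ to convert the real exponent $t$ into a nonnegative integer, then invoke Lemma~\ref{bd1} along lines through the origin and combine. The only difference is one of economy: the paper applies Lemma~\ref{bd1} just to the $n$ coordinate axes $l_j$ (which already suffices because $\sum_j|z_j|^{2m}$ is comparable to $\|z\|^{2m}$, so the sum of the $n$ integrands dominates $\|z\|^{2m}$ near $0$), whereas you run an unnecessary but harmless compactness argument over all directions on the sphere.
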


\begin{proof}
Fix an integer $m\geq t$ and define
\[
\psi:=\varphi+(m-t)\log\|z\|.
\]
For $1\leq j\leq n$, let
\[
l_j:=\{(z_1,\ldots,z_n)\in\mathbb C^n : z_k=0 \text{ for all } k\neq j\}
\]
be the coordinate complex lines through the origin.
By the assumptions on $\varphi$, we have
\[
0<\nu_{\psi}(0)
=\nu_{\varphi}(0)+(m-t)
=\nu_{\varphi|_{l_j}}(0)+(m-t)
=\nu_{\psi|_{l_j}}(0)
<m+1,
\]
for every $1\leq j\leq n$. Hence, Lemma~\ref{bd1} applies to $\psi$ along each coordinate line $l_j$. Consequently, for each $j$ there exists $\gamma_j\in(0,1)$ such that
$\Delta^n(0,\gamma_j)\Subset\Omega$ and
\[
\int_{\Delta^n(0,\gamma_j)}
\max\{\gamma_j|z_j|^{2m}-\|z\|^{2m+2},0\}
\,e^{-2\psi}\,dV_{2n}<+\infty.
\]
Let
\[
\gamma:=\min\{\gamma_1,\ldots,\gamma_n\}.
\]
Since $\|z\|^{2m}\leq C\sum_{j=1}^n |z_j|^{2m}$ on $\Delta^n(0,\gamma)$ for a suitable constant $C>0$, the above integrability implies
\[
\int_{\Delta^n(0,\gamma)} \|z\|^{2m} e^{-2\psi}\,dV_{2n}<+\infty.
\]
Recalling the definition of $\psi$, we conclude that
\[
\int_{\Delta^n(0,\gamma)} \|z\|^{2t} e^{-2\varphi}\,dV_{2n}<+\infty,
\]
which completes the proof.
\end{proof}

We are now in a position to conclude the proof of Theorem~\ref{th.}
\begin{proof}
[Proof of Theorem~\ref{th.}]
By Theorem~\ref{le3..}, the weighted log canonical threshold $c_t(\varphi)$ is finite and positive, that is,
\[
c_t(\varphi)\in(0,+\infty).
\]
Fix real numbers $p,q,x$ satisfying
\[
p>q\geq t>x>-n.
\]

We first establish the lower bound
\begin{equation}
\label{e6.11}
c_p(\varphi)\geq \frac{p+1}{\nu_\varphi(0)}.
\end{equation}
Indeed, let
\[
0<c<\frac{p+1}{\nu_\varphi(0)}.
\]
Then
\[
\nu_{c\varphi}(0)=c\,\nu_\varphi(0)<p+1.
\]
Applying Lemma~\ref{bd2}, we obtain a constant $\gamma\in(0,1)$ such that
$\Delta^n(0,\gamma)\Subset\Omega$ and
\[
\int_{\Delta^n(0,\gamma)} \|z\|^{2p} e^{-2c\varphi}\,dV_{2n}<+\infty.
\]
This implies that $c_p(\varphi)\geq c$. Since $c$ is arbitrary, the estimate \eqref{e6.11} follows.

Next, combining Theorem~\ref{le3..} with Theorem~\ref{th2}, we obtain
\begin{align*}
\frac{1}{\nu_\varphi(0)}
&\geq \frac{c_t(\varphi)-c_x(\varphi)}{t-x}
\geq \frac{c_p(\varphi)-c_q(\varphi)}{p-q}
\\
&\geq \frac{1}{p-q}\left(\frac{p+1}{\nu_\varphi(0)}-\frac{q+n}{\nu_\varphi(0)}\right)
= \frac{p-q+1-n}{(p-q)\nu_\varphi(0)}.
\end{align*}
Letting $p\to+\infty$ in the last inequality yields
\[
\frac{1}{\nu_\varphi(0)}
\geq \frac{c_t(\varphi)-c_x(\varphi)}{t-x}
\geq \lim_{p\to+\infty}\frac{p-q+1-n}{(p-q)\nu_\varphi(0)}
= \frac{1}{\nu_\varphi(0)}.
\]
Consequently,
\begin{equation}
\label{e6.11?1}
\frac{c_t(\varphi)-c_x(\varphi)}{t-x}
= \frac{1}{\nu_\varphi(0)}, \qquad \forall\, x\in(-n,t).
\end{equation}

On the other hand, Theorem~\ref{le3..} implies
\[
0\leq \liminf_{x\to -n} c_x(\varphi)
\leq \limsup_{x\to -n} c_x(\varphi)
\leq \lim_{x\to -n}\frac{x+n}{\nu_\varphi(0)}=0.
\]
Hence,
\[
\lim_{x\to -n} c_x(\varphi)=0.
\]
Passing to the limit $x\to -n$ in \eqref{e6.11?1}, we finally obtain
\[
c_t(\varphi)
= (t+n)\lim_{x\to -n}\frac{c_t(\varphi)-c_x(\varphi)}{t-x}
= \frac{t+n}{\nu_\varphi(0)}.
\]
This completes the proof.
\end{proof}

\section{Growth estimates from complex Monge-Amp\`ere masses}

We now give the proof of Theorem~\ref{th}, which follows from a careful
combination of the estimates obtained in the previous sections.
\begin{proof}
[Proof of Theorem~\ref{th}]
By Lemma~\ref{le4.}, we have
\[
e_n(\varphi)\in(0,+\infty).
\]
Applying Theorem~\ref{le3..} together with inequality~\eqref{e21.11.318}, we obtain
\[
\frac{n}{e_n(\varphi)^{1/n}}
\leq c_0(\varphi)
\leq c_{|t|}(\varphi)
\leq \frac{|t|+n}{\nu_\varphi(0)}.
\]
Consequently,
\begin{equation}
\label{e21.10}
\frac{c_{|t|}(\varphi)^n e_n(\varphi)}{(n-1)^n}
\geq
\left(\frac{n}{e_n(\varphi)^{1/n}}\right)^n
\frac{e_n(\varphi)}{(n-1)^n}
=
\left(\frac{n}{n-1}\right)^n .
\end{equation}
Moreover,
\begin{equation}
\label{e21.10.}
\begin{split}
\frac{(n-1)^{n-1}}{c_{|t|}(\varphi)^{n-1} e_n(\varphi)}
&\geq
\left(\frac{\nu_\varphi(0)}{|t|+n}\right)^{n-1}
\frac{(n-1)^{n-1}}{e_n(\varphi)}
\\
&=
\left(
\frac{(n-1)\nu_\varphi(0)}
{(|t|+n)\, e_n(\varphi)^{1/(n-1)}}
\right)^{n-1}.
\end{split}
\end{equation}

On the other hand, Theorem~1.1 in~\cite{Hong19} asserts that
\[
c_{|t|+s}(\varphi_t)
\geq
c_{|t|}(\varphi)
+
s\,\frac{(n-1)^{n-1}}{c_{|t|}(\varphi)^{n-1} e_n(\varphi)}
\]
for all
\[
0\leq s \leq \frac{c_{|t|}(\varphi)^n e_n(\varphi)}{(n-1)^n}.
\]
Combining this estimate with Theorem~\ref{th2}, we deduce that
\begin{align*}
c_{t+s}(\varphi)-c_t(\varphi)
&\geq
c_{|t|+s}(\varphi)-c_{|t|}(\varphi)
\\
&\geq
s\,\frac{(n-1)^{n-1}}{c_{|t|}(\varphi)^{n-1} e_n(\varphi)}
\end{align*}
for every
\[
0\leq s \leq \frac{c_{|t|}(\varphi)^n e_n(\varphi)}{(n-1)^n}.
\]

Finally, combining the above inequality with \eqref{e21.10} and \eqref{e21.10.}, we conclude that
\[
c_{t+s}(\varphi)-c_t(\varphi)
\geq
s\left(
\frac{(n-1)\nu_\varphi(0)}
{(|t|+n)\, e_n(\varphi)^{1/(n-1)}}
\right)^{n-1}
\]
for all
\[
0\leq s \leq \left(\frac{n}{n-1}\right)^n.
\]
The proof is complete.
\end{proof}


\begin{thebibliography}{00} 

\bibitem{ACCH} P. \AA hag, U. Cegrell, R. Czy$\dot{\rm z}$ and P. H. Hiep,  {\it Monge-Amp\`{e}re measures on pluripolar sets}, J. Math. Pures Appl., {\bf 92 } (2009), 613--627.

\bibitem{ACKHZ} P. \AA hag, U. Cegrell, S. Ko\l odziej, P. H. Hiep and A. Zeriahi, {\em Partial pluricomplex energy and integrability exponents of plurisubharmonic functions},   Adv. Math. {\bf 222}, 2036--2058 (2009).

\bibitem {Be15} B. Berndtsson, {\em The openness conjecture and complex Brunn-Minkowski inequalities}, Complex geometry and dynamics, 29--44, Abel Symp., 10, Springer, Cham, 2015. 
 
 
\bibitem{Ce04} U. Cegrell, {\em The general definition of the complex Monge-Amp\`ere operator}, Ann. Inst. Fourier., {\bf 54} (2004),   159--179.

\bibitem{D21} J.-P. Demailly, {\em Multiplier ideal sheaves and analytic methods in algebraic geometry}, In: School on vanishing theorems and effective results in algebraic geometry, 1--148. The Abdus Salam ICTP Publications, Trieste (2001).

\bibitem{DeHi} J.-P. Demailly and  P. H. Hiep, {\em A sharp lower bound for the log canonical threshold}, Acta Math. {\bf 212} (2014) 1--9.

\bibitem{DeKo} J.-P. Demailly and J. Koll\'{a}r, {\em Semi-continuity of complex singularity exponents and K\"{a}hler-Einstein metrics on Fano orbifolds}, Ann. Sci. \'{E}c. Norm. Sup\'{e}r. (4) {\bf 34} (2001) 525--556.

\bibitem{E-W1} S. El Marzguioui, J. Wiegerinck, {\em The pluri-fine topology is locally connected}, Potential Anal.  {\bf 25} (2006), no. 3, 283--288.

\bibitem{GZ} Q. Guan and X. Zhou, {\em A proof of Demailly's strong openness conjecture}, Ann. Math. {\bf 182}, 605--616 (2015).

\bibitem{GZ20} Q. Guan and X. Zhou, {\em  Restriction formula and subadditivity property related to multiplier ideal sheaves}, J. Reine Angew. Math. 769 (2020), 1--33.  

\bibitem{GZ17} V. Guedj and A. Zeriahi,  {\em  Degenerate complex Monge-Amp\`ere equations}, EMS Tracts in Mathematics, 26. European Mathematical Society (EMS), Z\"urich, 2017. xxiv+472 pp.

\bibitem{Hie14} P. H. Hiep, {\em The weighted log canonical threshold}, C. R. Acad. Sci. Paris, Ser. I {\bf 352}  (2014) 283--288.

\bibitem{Hie17} P. H. Hiep, {\em Continuity properties of certain weighted log canonical thresholds}, C. R. Acad. Sci. Paris, Ser. I {\bf 355} (2017) 34--39. 


\bibitem{H17} P. H. Hiep, {\em Log canonical thresholds and Monge-Amp\`ere masses}, Math.Ann., {\bf 370} (2018), 555--566.

\bibitem{Hiep23} P. H. Hiep, {\em Singularity invariants of plurisubharmonic functions and complex spaces}, arXiv:2304.02238v8.


\bibitem{Hong18} N. X. Hong, {\em  A note on the weighted log canonical thresholds of plurisubharmonic functions}, C. R. Acad. Sci. Paris, Ser. I, {\bf 356} (2018), 865--869. 
\bibitem{Hong19} N. X. Hong, {\em  On the weighted log canonical thresholds of plurisubharmonic functions},  Proc. Amer. Math. Soc. {\bf 147}, 5063--5070 (2019).
  
\bibitem{Hong25} N. X. Hong, {\em  Strong openness conjecture and level sets of plurisubharmonic functions}, Analysis Mathematica,  {\bf 51 } (3) (2025), 805--814.
  
\bibitem{OhTa} T. Ohsawa and  K. Takegoshi, {\em On the extension of $L^2$ holomorphic functions}, Math. Z. {\bf 195} (1987) 197--204.
 
\bibitem{TT} T. Tung, {\em Some results on the comparison principle for the weighted log canonical thresholds}, Internat. J. Math. {\bf 34} (2023), no. 11, Paper No. 2350066, 12 pp.
\bibitem{R} A. Rashkovskii,  {\em  Extremal cases for the log canonical threshold},  C. R. Math. Acad. Sci. Paris, Ser. I {\bf 353} (2015), no. 1, 21--24

\bibitem{Sko} H. Skoda, {\em Sous-ensembles analytiques d'ordre fini ou infini dans $\mathbb C^n$},  Bull. Soc. Math. France, {\bf  100}, 353--408 (1972).
\end{thebibliography}
\end{document}